\DeclareMathOperator{\AGL}{AGL}
\DeclareMathOperator{\Aut}{Aut}
\DeclareMathOperator{\rep}{rep}
\DeclareMathOperator{\Rep}{Rep}
\DeclareMathOperator{\PSL}{PSL}
\DeclareMathOperator{\PGaL}{P\Gamma L}
\DeclareMathOperator{\supp}{supp}
\DeclareMathOperator{\fix}{fix}
\DeclareMathOperator{\Diag}{Diag}
\DeclareMathOperator{\SL}{SL}
\DeclareMathOperator{\ASL}{ASL}
\DeclareMathOperator{\Sym}{Sym}
\DeclareMathOperator{\n}{{\bf{v}}}
\DeclareMathOperator{\tw}{tw}
\newtheorem{theorem}{Theorem}[section]
\newtheorem{proposition}[theorem]{Proposition}
\newtheorem{lemma}[theorem]{Lemma}
\newtheorem{corollary}[theorem]{Corollary}
\theoremstyle{definition}
\newtheorem{remark}[theorem]{Remark}
\newcommand{\diag}{\operatorname{\sf Diag}}
\newcommand{\F}{\mathbb F}
\renewcommand{\leq}{\leqslant}
\renewcommand{\geq}{\geqslant}
\numberwithin{equation}{section}
\begin{document}

\title[Twisted permutation codes] 
      {Twisted permutation codes} 
\author{Neil I. Gillespie, Cheryl E. Praeger and Pablo Spiga}
\address{[Gillespie] Heilbronn Institute for Mathematical Research, School of Mathematics, Howard House, University of Bristol, UK.}
\email{neil.gillespie@bristol.ac.uk}
\address{[Praeger] Centre for Mathematics of Symmetry and Computation
School of Mathematics and Statistics
The University of Western Australia
35 Stirling Highway, Crawley
Western Australia 6009.
Also affiliated with King Abdulaziz University, Jeddah, Saudi Arabia.}
\email{cheryl.praeger@uwa.edu.au}
\address{[Spiga] Dipartimento di Matematica e Applicazioni, University of Milano-Bicocca,
Via Cozzi 55, 20125 Milano, Italy.}
\email{pablo.spiga@unimib.it}

\begin{abstract}  
We introduce \emph{twisted permutation codes}, which are frequency permutation arrays 
analogous to repetition permutation codes, namely, codes obtained from the repetition construction applied
to a permutation code.  In particular, we show that a lower bound for the minimum distance of a twisted permutation code is the minimum distance of a
repetition permutation code.  We give examples where this bound is tight, but more importantly, we
give examples of twisted permutation codes with minimum distance strictly greater than this lower bound.  
\end{abstract}

\thanks{{\it Date:} draft typeset \today\\
{\it 2010 Mathematics Subject Classification:} 94B60, 20B20, 05E18\\
{\it Key words and phrases: powerline communication, constant
  composition codes, frequency permutation arrays, permutation codes, neighbour
  transitive codes, 2-transitive permutation groups}}

\maketitle

\section{Introduction}\label{secintro}
Transmitting digital information using existing electrical infrastructure, known as \emph{powerline communication},
has been proposed as a possible solution to the ``last mile problem'' in telecommunications \cite{hanvinck,stateoftheart}.
Constant composition codes are coding schemes that are particularly well suited to deal with
the extra noise present in powerline communication, while at the same time maintaining
a necessary constant power output \cite{chu1,chu2,chu3,luo}.  Moreover, it is suggested in \cite{chu1} that
\emph{frequency permutation arrays}, a class of constant composition codes, 
are particularly well suited for powerline communication.
A frequency permutation array of length $m=rq$ over an alphabet $Q$ of size $q$ is a code
with the property that in each codeword, every letter from $Q$ appears exactly $r$ times.  They have 
been studied in \cite{sophie2,sophie1}.  
In \cite{diagnt}, the first two authors characterised a family of \emph{neighbour transitive codes} (see Section~\ref{secneitran})
in which frequency permutation arrays play  
a central role.  In the same paper, the \emph{permutation codes} generated by groups in this family were classified,
and by applying a repetition construction to these codes, infinite families of non-trivial neighbour 
transitive frequency permutation arrays were constructed.  In this setting, repeating codewords improves
the minimum distance of the code only by a factor of the number of repetitions.  In this paper
we introduce \emph{twisted permutation codes}, which are
frequency permutation arrays that are generated by groups and are analogous to repeated permutation codes.  
We give examples where the minimum distance is improved by a greater amount than that achieved by the repetition
construction.   

Let $T$ be an abstract group and, as $|Q|=q$, identify the Symmetric group on $Q$ with $S_q$, the Symmetric group on $\{1,\ldots,q\}$.  
We call a group homomorphism $\rho$ from $T$ to $S_q$ a \emph{representation} of $T$
of \emph{degree $q$}.  Given such a representation we define the permutation code $C(T,\rho)$ (see Section \ref{secconst}). If $\alpha$ is 
a codeword in $C(T,\rho)$, we let $\rep_r(\alpha)=(\alpha,\ldots,\alpha)$ denote the
$r$-tuple with constant entry $\alpha$, and we let \begin{equation}\label{repconst}\Rep_r(C(T,\rho))=\{\rep_r(\alpha)\,:\,\alpha\in C(T,\rho)\}.
\end{equation}
The code $\Rep_r(C(T,\rho))$ is a frequency permutation array of length $rq$ where every
letter appears $r$ times in each codeword.  (This is the repetition construction mentioned above.)  
A twisted permutation code is a frequency permutation array generated by a group $T$
and several (not necessarily distinct) representations of $T$ of the same degree. (See also Table \ref{max}.)
Specifically, we consider an ordered $r$-tuple $\mathcal{I}$ of representations 
of $T$ to $S_q$ and construct the twisted permutation code 
$C(T,\mathcal{I})$ (see Section \ref{secconst}), a frequency permutation array of length $rq$ over $Q$.  
By letting $\delta_{\tw}$ be the minimum distance of $C(T,\mathcal{I})$ and 
$\delta_{\rep}$ be the minimum of the minimum distances of $\Rep_r(C(T,\rho))$ as $\rho$ varies over $\mathcal{I}$, we prove the
following.  

\begin{theorem}\label{mainthm}  Let $q$ be a positive integer, $T$ an abstract group, and $\mathcal{I}$ an ordered $r$-tuple of (not necessarily distinct)
permutation representations of $T$ into $S_q$.  Then $C(T,\mathcal{I})$ as defined in Section \ref{secconst}
is a frequency permutation array of length $rq$ with minimum distance $\delta_{\tw}\geq\delta_{\rep}$.  Moreover,
the twisted permutation codes described in Table \ref{tablethm} have a minimum distance that is strictly greater than this lower bound.  
\end{theorem}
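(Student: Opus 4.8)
The plan is to treat the two parts of the theorem separately.

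For the general assertion, note first that by the construction in Section~\ref{secconst} the $i$-th of the $r$ length-$q$ blocks of any codeword of $C(T,\mathcal{I})$ is a permutation of $Q$; hence each letter of $Q$ occurs exactly once in each block and so exactly $r$ times overall, and $C(T,\mathcal{I})$ is a frequency permutation array of length $rq$. For the distance bound, take two distinct codewords, labelled by $t,s\in T$, and set $u=t^{-1}s$; the Hamming distance between them is the sum of the block-wise distances, and in the $i$-th block this is the number of points of $\{1,\dots,q\}$ moved by $\rho_i(u)$, namely $q-\fix(\rho_i(u))$. With the $\rho_i$ faithful (as in the construction), $u\neq1$ forces $\rho_i(u)\neq1$ for every $i$, so each block-wise distance is at least the minimal degree $\mindeg(\rho_i(T))$ of the permutation group $\rho_i(T)\leq S_q$; this minimal degree is precisely the minimum distance of $C(T,\rho_i)$, hence equals the minimum distance of $\Rep_r(C(T,\rho_i))$ divided by $r$, which is at least $\delta_{\rep}/r$. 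Summing over the $r$ blocks yields total distance at least $\delta_{\rep}$, so $\delta_{\tw}\geq\delta_{\rep}$.

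For the examples in Table~\ref{tablethm}, the reformulation to use is that (with all $\rho_i$ faithful)
\[
\delta_{\tw}=\min_{1\neq u\in T}\ \sum_{i=1}^{r}\bigl(q-\fix(\rho_i(u))\bigr)
\qquad\text{and}\qquad
\delta_{\rep}=r\cdot\min_{1\leq i\leq r}\mindeg(\rho_i(T)).
\]
Because $q-\fix(\rho_i(u))\geq\mindeg(\rho_i(T))$ for every $i$ and every $u\neq1$, the inequality is strict exactly when no single non-identity element of $T$ simultaneously attains the minimal number of moved points in all of $\rho_1,\dots,\rho_r$; in particular this is automatic once the minimal-degree elements of the different representations lie in distinct $T$-conjugacy classes. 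I would go through the table row by row: read off $T$, the parameters $r,q$, and the list $\rho_1,\dots,\rho_r$ (which come from the $2$-transitive representations classified in \cite{diagnt}); record $\mindeg(\rho_i(T))$ to obtain $\delta_{\rep}$; and then, since $q-\fix(\rho_i(u))$ depends only on the $\rho_i(T)$-class of $\rho_i(u)$, tabulate these values against the conjugacy classes of $T$ using the fixed-point numbers of the constituent permutation characters (for a $2$-transitive action the permutation character is $1+\chi$ with $\chi$ irreducible, so $\fix(\rho_i(u))=1+\chi_i(u)$). The smallest row-sum over $u\neq1$ is $\delta_{\tw}$, and for each listed code it exceeds $\delta_{\rep}$.

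The short part is the first; the work lies in the second, which is a case analysis — finite for the sporadic rows, uniform in the parameters for the infinite families. The delicate point, and the main obstacle, is that one must pin down the \emph{exact} minimiser of $\sum_i\bigl(q-\fix(\rho_i(u))\bigr)$ rather than merely bound it below, and then check it beats $\delta_{\rep}$; the representations in Table~\ref{tablethm} are chosen precisely so that an element of $T$ moving few points in one constituent moves many points in another, which is what forces the strict inequality.
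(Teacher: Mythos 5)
Your route is essentially the paper's: the lower bound is obtained by writing the distance between the codewords of $t$ and $s$ as $\sum_{i}|\supp(\rho_i(u))|$ with $u=t^{-1}s$ and comparing with $r$ times a minimal degree (this is Proposition~\ref{distlowerbound}(iii), where the reduction to a single reference codeword is done via the regular diagonal automorphism group rather than your inline translation, and Lemma~\ref{mindist1} supplies the identification of minimal degree with $\delta(C(T,\rho))$), and the strict inequalities for Table~\ref{tablethm} are verified exactly as you propose: tabulate support sizes class by class for each representation, sum, and take the minimum over non-identity classes (ATLAS character data for $S_6$ and $A_6$ in Sections~\ref{secs6}--\ref{seca6}, GAP coset actions for $\ASL(3,2)$ and for $S_6$ on cosets of order-$12$ subgroups in Sections~\ref{secasl} and~\ref{secs62}).

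One genuine gap: you insert the hypothesis that the $\rho_i$ are faithful ``as in the construction'', but the construction and the theorem allow arbitrary homomorphisms $T\to S_q$ (the remark after Proposition~\ref{distlowerbound} even contemplates $\ker\rho'=T$). Without faithfulness your block-wise step fails: for $u\in\ker\rho_i\setminus\{1\}$ the $i$-th block contributes $0$, not at least $\mindeg(\rho_i(T))$. The paper's chain avoids any per-block faithfulness: for each fixed $t\neq 1$ it bounds $\sum_{\rho\in\mathcal{I}}|\supp(t\rho)|\geq r\cdot\min_{\rho\in\mathcal{I}}|\supp(t\rho)|$, exchanges the two minimisations, and then uses Lemma~\ref{mindist1} to identify $r\cdot\min_{t\neq 1}|\supp(t\rho)|$ with $\delta(\Rep_r(C(T,\rho)))$; that is the form of $\delta_{\rep}$ the theorem is actually using, so you should either restrict as the paper implicitly does or adopt its chain. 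Two smaller slips: the fourth row of Table~\ref{tablethm} concerns $S_6$ of degree $60$ on cosets of order-$12$ subgroups, which is not $2$-transitive (and not from the classification in \cite{diagnt}), so your ``permutation character $=1+\chi$ with $\chi$ irreducible'' shortcut does not apply there --- the paper computes those supports directly in GAP; and your ``exactly when'' criterion for strictness is only exact when all the $\mindeg(\rho_i(T))$ coincide, since equality with the bound forces every block to attain the global minimum $\min_i\mindeg(\rho_i(T))$ --- harmless here, because you in any case determine $\delta_{\tw}$ exactly from the tables.
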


\begin{table}[h]
\centering
\begin{tabular}{c|c|c|c|c|c}
\hline\hline
$T$ & $r$ & $q$ & $\delta_{\rep}$ & $\delta_{\tw}$& Ref. \\
\hline
$S_6$&2&6&4&8&Sec. \ref{secs6}\\
$A_6$&2&6&6&8&Sec. \ref{seca6}\\
$\ASL(3,2)$&2&8&8&12&Sec. \ref{secasl}\\
$S_6$&4&60&176--192&$\leq$224&Sec. \ref{secs62}\\
\hline
\end{tabular}
\caption{Examples of Twisted Permutation Codes with improved minimum distance.}\label{tablethm}
\end{table}\label{firsttable}

\begin{remark}The codes in the fourth line of Table \ref{firsttable} are described in Section \ref{secs62}. There are several different repetition 
permutation codes and also several different twisted permutation codes, with minimum distances, which can be obtained explicitly 
from Table \ref{s62} in Section \ref{secs62}, ranging from 176 to 192, and from 176 to 224, respectively.
\end{remark}

In some cases, two representations of a group $T$ to $S_q$ 
can be identified with each other if there exists a relabelling of the point set $Q$ that maps one to the other.
However, this is not always possible, in which case the representations are distinct.  
For example, $S_6$ has two distinct representations of degree $6$, which are interchanged by an outer automorphism of order $2$.  
Each finite $2$-transitive almost simple group has at most two distinct representations of the same degree, with one infinite family
and six exceptional cases that have exactly two distinct representations~\cite[Table 7.4]{pcam}.  (This fact is a consequence of the 
Classification of Finite Simple Groups.)  Additionally, these groups share the property of $S_6$ that
the two actions are interchanged by an outer automorphism of order $2$.  For $T$ being one of these groups and $\rho_1,\rho_2$ the
distinct representations of $T$ of the same degree, we consider the codes $C(T,(\rho_1,\rho_2))$ in Section~\ref{secneitran} where we 
determine their minimum distance with respect to the lower bound $\delta_{\rep}$.  We also prove in 
Theorem \ref{ntthm} that these codes are neighbour transitive. 

Cameron \cite{pcam} also states that the $2$-transitive affine group $\ASL(2,r)$ with $r=2^f$ 
for some positive integer $f\geq 2$ has $r$ distinct representations of the same degree.
In Section \ref{secaff} we give an explicit construction of these distinct representations, and by letting $\mathcal{I}$ be an
$r$-tuple of these actions, we determine the minimum distance of $C(\ASL(2,r),\mathcal{I})$ with respect
to the lower bound $\delta_{\rep}$.  Finally, in Section \ref{seccomp}, we use the computer software program GAP to construct
some further examples of twisted permutation codes.  This allows us to prove Theorem \ref{mainthm} in Section \ref{proofmain}.  

\section{Definitions}\label{secdef}

\subsection{Codes} A code of length $m$ over an alphabet $Q$ of size $q$ can be embedded as a subset of the vertex set of
the Hamming graph $\Gamma=H(m,q)$, which has a vertex set $V(\Gamma)$ that consists
of $m$-tuples with entries from $Q$, and an edge exists between two
vertices if and only if they differ in precisely one entry.  Throughout this
paper we identify the alphabet $Q$ with the set $\{1,\ldots,q\}$ and
the group $\Sym(Q)$ with $S_q$.  The 
automorphism group of $H(m,q)$, which we denote by $\Aut(\Gamma)$, is
a semi-direct product $B\rtimes L$ where $B\cong S_q^m$ and $L\cong
S_m$ \cite[Thm. 9.2.1]{distreg}.  Let $g=(g_1,\ldots, g_m)\in
B$, $\sigma\in L$ and $\alpha=(\alpha_1,\ldots,\alpha_m)\in V(\Gamma)$. 
Then $g\sigma$ acts on $\alpha$ in the following way:        
\begin{align*}
\alpha^{g\sigma}=&(\alpha_{1{\sigma^{-1}}}^{g_{1{\sigma^{-1}}}},\ldots,\alpha_
{m{\sigma^{-1}}}^{g_{m{\sigma^{-1}}}}).
\end{align*} For all pairs of vertices $\alpha,\beta\in V(\Gamma)$, the \emph{Hamming
  distance} between $\alpha$ and $\beta$, denoted by
$d(\alpha,\beta)$, is defined to be the number of entries in which the
two vertices differ.  It is the distance between $\alpha$ and $\beta$ in $\Gamma$.  
We let $\Gamma_k(\alpha)$ denote the set of vertices in $\Gamma$ that are at distance $k$ from $\alpha$. 

The \emph{minimum distance, $\delta(C)$,} of a code $C$ is the smallest distance between distinct
codewords of $C$.    If $C$ consists of exactly one codeword, then we let $\delta(C)=0$.  
Another code $C'$ in $H(m,q)$ is \emph{equivalent} to $C$ if there exists
$x\in\Aut(\Gamma)$ such that $C^x=C'$, and if $C=C'$ we call $x$ an
\emph{automorphism of $C$}.  The \emph{automorphism group of $C$}
is the setwise stabiliser of $C$ in $\Aut(\Gamma)$, which we denote by
$\Aut(C)$.  The \emph{inner distance distribution of $C$} is the 
$(m+1)$-tuple $\kappa(C)=(a_0,\ldots,a_m)$
where \begin{equation}\label{ai}a_i=\frac{|\{(\alpha,\beta)\in 
  C^2\,:\,d(\alpha,\beta)=i\}|}{|C|}.\end{equation}   
We observe that $a_i\geq 0$ for all $i$ and $a_0=1$.  Moreover,
$a_i=0$ for $1\leq i\leq \delta-1$ and $|C|=\sum_{i=0}^ma_i$. 
For a codeword $\alpha$, the \emph{distance distribution from $\alpha$} is
the $(m+1)$-tuple $\kappa(\alpha)=(a_0(\alpha),\ldots,a_m(\alpha))$ where
$a_k(\alpha)=|\Gamma_k(\alpha)\cap C|$.  

We say a code $C$ is \emph{distance invariant} if the number of codewords at distance
$i$ from a codeword is independent of the choice of codeword.  That is $\kappa(C)=\kappa(\alpha)$
for each codeword $\alpha$.  It is straightforward to deduce that if a group of automorphisms of a 
code acts transitively, then the code is necessarily distance invariant.  

\subsection{Permutation Groups} Let $\Omega$ be a non-empty set.
We denote the group of permutations of $\Omega$ by $\Sym(\Omega)$.  A 
\emph{permutation group} on $\Omega$ is a subgroup of $\Sym(\Omega)$.  
Suppose $G$ is a permutation group on $\Omega$ and $t\in G$.  We define
the \emph{support of $t$} as the set $\supp(t)=\{\alpha\in\Omega\,:\,\alpha^t\neq\alpha\}$
and the set of \emph{fixed points of $t$} as $\fix(t)=\{\alpha\in\Omega\,:\,\alpha^t=\alpha\}$.  It 
follows that $\Omega=\supp(t)\cup\fix(t)$ for all $t\in G$.  We say $G$ acts \emph{regularly} on $\Omega$ if $G$ acts
transitively on $\Omega$ and $G_\alpha=1$ for all $\alpha\in\Omega$.  

Let $G$ be an abstract group now.
An \emph{action} of $G$ on $\Omega$ is a homomorphism $\rho$ from $G$ to $\Sym(\Omega)$, in which
case we say \emph{$G$ acts on $\Omega$} or \emph{$\rho$ defines an action of $G$ on $\Omega$}.  We also
call $\rho$ a (\emph{permutation}) \emph{representation} of $G$ on $\Omega$.  The \emph{degree of the
action} is the cardinality of $\Omega$.  In this paper, all actions have finite degree.  
Let $\rho_1:G\longrightarrow\Sym(\Omega)$ and $\rho_2:H\longrightarrow\Sym(\Omega')$ be actions 
of the groups $G,H$ on $\Omega$ and $\Omega'$.  We say these actions are \emph{permutationally isomorphic} if there exists a 
bijection $\lambda:\Omega\longrightarrow\Omega'$ and an
isomorphism $\varphi:\rho_1(G)\longrightarrow \rho_2(H)$ such that 
\begin{equation}\label{acteq}\lambda(\alpha^{\rho_1(g)})=\lambda(\alpha)^{\varphi(\rho_1(g))}\quad\quad
\textnormal{for all $\alpha\in\Omega$ and $g\in G$},\end{equation}
and we call $(\lambda,\varphi)$ a \emph{permutational isomorphism}.  
If $G=H$ and $\varphi$ is the identity map, then we say the two actions of $G$ are \emph{equivalent}.  However, 
if there does not exist a bijection $\lambda:\Omega\longrightarrow\Omega'$ such that \eqref{acteq} holds with
$G=H$ and $\varphi$ equal to the identity map, then we say the two actions are \emph{inequivalent.}

\section{Constructions}\label{secconst}

Let $Q=\{1,\ldots,q\}$ and $H(q,q)$ be the Hamming graph of length $q$ over $Q$.  We denote 
the Symmetric group on $Q$ by $S_q$.  Let $T$
be an abstract group and $\rho:T\longrightarrow S_q$ be an action of $T$ on $Q$ denoted by $t\mapsto t\rho$.  
For $t\in T$, we identify the permutation $t\rho$ with the vertex in $H(q,q)$ that represents its passive form, that is, 
with $\alpha(t,\rho)=(1^{t\rho},\ldots,q^{t\rho})\in H(q,q)$.
We naturally define 
\begin{equation}\label{1code}C(T,\rho)=\{\alpha(t,\rho)\,:\,t\in T\}.\end{equation}  
The code $C(T,\rho)$ is an example of a \emph{permutation code}.  
Permutation codes were introduced in the 1970s \cite{blake,blakeetal,frankl}, where \emph{sets} 
of permutations in their passive form were considered rather than groups.  
Due to applications in \emph{powerline communication}, Chu, 
Colbourn and Dukes \cite{chu1} renewed the interest in permutation codes, giving new constructions
of such codes.  Other interesting results on permutation codes include a beautiful decoding algorithm by Bailey \cite{bailey} for permutation 
codes of groups; Cameron and Wanless' \cite{camwan} examination of the covering radius of a permutation code; and Cameron and Gadouleau's \cite{camgad}
introduction of the \emph{remoteness of a code} and their examination of this parameter with respect 
to permutation codes.

As discussed in Section \ref{secdef}, the automorphism group of $\Gamma=H(q,q)$ is equal to $B\rtimes L$ where $B\cong S_q^q$ and
$L\cong S_q$.  To distinguish between automorphisms of $\Gamma$ and permutations in $S_q$, we introduce the
following notation.  For $t\in T$ and $\rho:T\longrightarrow S_q$, we let 
$x_{t\rho}=(t\rho,\ldots,t\rho)\in B$, and we let $\sigma(t\rho)$ denote the
automorphism induced by $t\rho$ in $L$.  Since $\rho$ is a homomorphism, it holds 
for $t\in T$ and $\alpha(s,\rho)\in V(\Gamma)$ that 
$$\alpha(s,\rho)^{x_{t\rho}}=(1^{s\rho},\ldots,q^{s\rho})^{(t\rho,\ldots,t\rho)}=
(1^{s\rho t\rho},\ldots,q^{s\rho t\rho})=(1^{(st)\rho},\ldots,q^{(st)\rho})=\alpha(st,\rho).$$
Now, suppose that $i^{t\rho}=j$ for $i,j\in Q$.  Then, by considering
$\alpha(s,\rho)$ as the $q$-tuple $(\alpha_1,\ldots,\alpha_q)$, it holds 
that $\alpha(s,\rho)^{\sigma(t\rho)}|_j=\alpha_i=i^{s\rho}=j^{t^{-1}\rho s\rho}=j^{(t^{-1}s)\rho}$.  Thus $\alpha(s,\rho)^{\sigma(t\rho)}=\alpha(t^{-1}s,\rho)$, 
and we have proved the following.       

\begin{lemma}\label{start} Let $\alpha(s,\rho)\in C(T,\rho)$ and $t\in T$.
  Then $\alpha(s,\rho)^{x_{t\rho}}=\alpha(st,\rho)$ and
  $\alpha(s,\rho)^{\sigma(t\rho)}=\alpha(t^{-1}s,\rho)$.
\end{lemma}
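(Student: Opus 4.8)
The plan is to verify both identities directly from the coordinate description of the $\Aut(\Gamma)$-action, splitting the two claims according to whether the acting automorphism lies in the base group $B$ or in the top group $L$. Throughout I would use that $\rho$ is a homomorphism, so that $(s\rho)(t\rho)=(st)\rho$ and $(t\rho)^{-1}=t^{-1}\rho$ in $S_q$; these two facts are exactly what convert composition of permutations into the group operation of $T$, and they are the only properties of $\rho$ that the argument needs.

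For the first identity, write $\alpha(s,\rho)=(1^{s\rho},\ldots,q^{s\rho})$, so that its $i$th entry is $i^{s\rho}$. Since $x_{t\rho}=(t\rho,\ldots,t\rho)$ has trivial $L$-component, the displayed action formula specialises to entrywise application of $t\rho$: the $i$th entry of $\alpha(s,\rho)^{x_{t\rho}}$ is $(i^{s\rho})^{t\rho}=i^{(s\rho)(t\rho)}=i^{(st)\rho}$. Reading off all $q$ coordinates gives $(1^{(st)\rho},\ldots,q^{(st)\rho})=\alpha(st,\rho)$, which is the first claim. The content here is simply that diagonal base-group elements realise right multiplication in $T$.

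For the second identity, $\sigma(t\rho)$ has trivial base component and acts on $V(\Gamma)$ purely by permuting coordinates according to $t\rho$; here I must be careful that the action formula permutes coordinates by $\sigma^{-1}$. Thus the $j$th entry of $\alpha(s,\rho)^{\sigma(t\rho)}$ is the entry of $\alpha(s,\rho)$ indexed by $j^{(t\rho)^{-1}}$. Writing $i=j^{(t\rho)^{-1}}$, equivalently $i^{t\rho}=j$, that entry equals $i^{s\rho}=(j^{(t\rho)^{-1}})^{s\rho}=j^{(t\rho)^{-1}(s\rho)}=j^{(t^{-1}s)\rho}$, using $(t\rho)^{-1}=t^{-1}\rho$. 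Hence the $j$th coordinate of $\alpha(s,\rho)^{\sigma(t\rho)}$ is $j^{(t^{-1}s)\rho}$ for every $j$, so $\alpha(s,\rho)^{\sigma(t\rho)}=\alpha(t^{-1}s,\rho)$, as required.

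The computations are routine, and the only genuine pitfall is the index bookkeeping in the second identity: the inverse in the coordinate-permutation rule for $\alpha^{\sigma}$ is precisely what turns a naive ``right multiplication by $t$'' into left multiplication by $t^{-1}$. Getting this inverse right, and recognising that $\sigma(t\rho)$ permutes positions by the same permutation $t\rho$ that $x_{t\rho}$ applies entrywise, is the one place where an orientation or inverse error could creep in, so I would confirm it by tracking the single coordinate $j$ through the action exactly as above.
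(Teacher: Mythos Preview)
Your proposal is correct and follows essentially the same approach as the paper: both compute each coordinate directly, using the homomorphism property $(s\rho)(t\rho)=(st)\rho$ for the base-group action and the index substitution $i^{t\rho}=j$ (equivalently $i=j^{(t\rho)^{-1}}$) for the top-group action. The paper's argument is slightly more compressed but is line-for-line the same computation.
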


As any group has a regular action on itself by right multiplication, it is a consequence of Lemma \ref{start} 
that $\Diag(T,\rho)=\{x_{t\rho}\,:\,t\in T\}$ acts regularly on $C(T,\rho)$.
This, in particular, implies that $C(T,\rho)$ is distance invariant.

\begin{lemma}\label{mindist1} 
For $t\in T$ we have $d(\alpha(1,\rho),\alpha(t,\rho))=|\supp(t\rho)|$.  Moreover, 
$C(T,\rho)$ has minimum distance $\delta(C(T,\rho))=\min\{|\supp(t\rho)|\,:\,1\neq t\in T\}$, the minimal degree of $T\rho$.  
\end{lemma}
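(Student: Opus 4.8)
The plan is to reduce everything to the fact, already recorded in Lemma \ref{start}, that the element $x_{t\rho}\in B$ sends the codeword $\alpha(1,\rho)$ to $\alpha(t,\rho)$, and that the subgroup $\Diag(T,\rho)=\{x_{t\rho}:t\in T\}$ acts regularly on $C(T,\rho)$, so that $C(T,\rho)$ is distance invariant. First I would compute the Hamming distance $d(\alpha(1,\rho),\alpha(t,\rho))$ directly from the definitions: writing $\alpha(1,\rho)=(1,2,\ldots,q)$ (since $1\rho$ is the identity) and $\alpha(t,\rho)=(1^{t\rho},\ldots,q^{t\rho})$, the two $q$-tuples differ in coordinate $i$ precisely when $i^{t\rho}\neq i$, i.e.\ when $i\in\supp(t\rho)$. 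Hence $d(\alpha(1,\rho),\alpha(t,\rho))=|\supp(t\rho)|$, which is the first assertion.

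Next I would use distance invariance to get the minimum distance. Since $\Diag(T,\rho)$ acts transitively (indeed regularly) on $C(T,\rho)$, the code is distance invariant, so the minimum distance equals the minimum of $d(\alpha(1,\rho),\alpha(t,\rho))$ over all codewords $\alpha(t,\rho)\neq\alpha(1,\rho)$. A codeword $\alpha(t,\rho)$ is distinct from $\alpha(1,\rho)$ exactly when $t\rho\neq 1$ in $S_q$, which by the computation above is equivalent to $\supp(t\rho)\neq\emptyset$. (One should note that two group elements $t,t'$ with $t\rho=t'\rho$ give the same codeword, so it is cleaner to range over $t\in T$ with $t\rho\neq 1$; but since we only take a minimum this does not affect the value, and $1\in T$ certainly has $1\rho=1$, so "$1\neq t\in T$" together with the possibility of a non-faithful $\rho$ is harmless — every $t$ with $t\rho=1$ simply contributes the already-excluded codeword $\alpha(1,\rho)$.) Therefore
\[
\delta(C(T,\rho))=\min\{\,|\supp(t\rho)|\ :\ t\in T,\ t\rho\neq 1\,\}=\min\{\,|\supp(t\rho)|\ :\ 1\neq t\in T\,\},
\]
and by definition this last quantity is the minimal degree of the permutation group $T\rho\leq S_q$.

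The only genuinely delicate point — more a matter of careful bookkeeping than a real obstacle — is the interplay between elements of the abstract group $T$ and their images in $S_q$ when $\rho$ is not faithful: one must make sure that "$1\neq t\in T$" in the statement is interpreted so that the stated formula is correct even when $\ker\rho\neq 1$. As observed above, any $t\in\ker\rho$ contributes $|\supp(t\rho)|=|\supp(1)|=0$, but it also yields the codeword $\alpha(t,\rho)=\alpha(1,\rho)$, which is not a \emph{distinct} codeword from $\alpha(1,\rho)$ and so is not counted in the minimum distance; consequently the formula as written is fine, since any such $t$ would have to be excluded on the grounds that it gives a repeated codeword, not on the grounds of its support. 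I would add a one-line remark to this effect rather than belabour it. Everything else is immediate from Lemma \ref{start} and the definitions of support, Hamming distance, and minimal degree.
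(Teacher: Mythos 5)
Your argument is correct and is essentially the paper's own proof: you observe that $\alpha(1,\rho)$ and $\alpha(t,\rho)$ differ precisely in the coordinates belonging to $\supp(t\rho)$, and then use distance invariance (coming from the regular action of $\Diag(T,\rho)$ noted after Lemma \ref{start}) to reduce the minimum distance to a minimum over $1\neq t\in T$. The only blemish is your parenthetical claim that replacing the index set $\{t\in T : t\rho\neq 1\}$ by $\{1\neq t\in T\}$ ``does not affect the value'': when $\ker\rho\neq 1$ but $T\rho\neq 1$ it does, since kernel elements contribute $|\supp(t\rho)|=0$ and drag the right-hand minimum to $0$ while the code's actual minimum distance remains positive; so the displayed formula really presupposes that $\rho$ is faithful (an assumption the paper's proof makes tacitly as well), and your remark should be corrected or dropped rather than used to assert the formula holds verbatim in the non-faithful case.
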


\begin{proof}  For $1\neq t\in T$ it follows that  
$\alpha(1,\rho)|_i\neq\alpha(t,\rho)|_i$ if and only if $i\neq i^{t\rho}$, which holds 
if and only if $i\in\supp(t\rho)$, from which the first statement follows. Now, as $C=C(T,\rho)$ is distance invariant, it has minimum distance 
$$\delta(C)=\min\{d(\alpha(1,\rho),\alpha(t,\rho))\,:\,1\neq t\in T\}.$$ 
  Thus $\delta(C)=\min\{|\supp(t\rho)|\,:\,1\neq t\in T\}$.
\end{proof}

We now consider a more general construction.  Let $\mathcal{I}=(\rho_1,\ldots,\rho_r)$ be an ordered list of 
$r$ (not necessarily distinct) representations from $T$ to $S_q$ and define 
$$\alpha(t,\mathcal{I})=(\alpha(t,\rho_1),\ldots,\alpha(t,\rho_r))\in H(rq,q),$$
which is an $r$-tuple of codewords of the form given in (\ref{1code}).  Hence, we naturally define 
$$C(T,\mathcal{I})=\{\alpha(t,\mathcal{I})\,:\,t\in T\}.$$ We call 
$C(T,\mathcal{I})$ a \emph{twisted permutation code}.  Note that if $r=1$ this is just
the construction given in (\ref{1code}), and if $\rho_1=\cdots=\rho_r$ then $C(T,\mathcal{I})=\Rep_r(C(T,\rho_1))$ as in \eqref{repconst}.

\begin{proposition}\label{distlowerbound} Consider the code $C(T,\mathcal{I})$, with notation as above.  
Then $C(T,\mathcal{I})$ is a frequency permutation array of length $rq$.  Moreover \begin{itemize}
\item[(i)] there exists a group of automorphisms acting regularly on $C(T,\mathcal{I})$.  In particular $C(T,\mathcal{I})$ is distance invariant;
\item[(ii)] the size of $C(T,\mathcal{I})$ is equal to the order of the factor group $T/K$, where $K=\cap_{\rho\in\mathcal{I}}\ker\rho$;
\item[(iii)] $\delta(C(T,\mathcal{I}))=\min_{t\in T^\#}\sum_{\rho\in\mathcal{I}}|\supp(t\rho)|\geq \min_{\rho\in\mathcal{I}}\{\delta(\Rep_r(C(T,\rho)))\}$, where
$T^\#=T\backslash\{1\}$.  
\end{itemize}
\end{proposition}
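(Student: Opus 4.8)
The plan is to reduce the whole statement to the single-representation results Lemma~\ref{start} and Lemma~\ref{mindist1}, by regarding $C(T,\mathcal{I})$ as $r$ permutation codes of the form \eqref{1code} glued together blockwise under the identification of $H(rq,q)$ with $H(q,q)^r$. With this picture the frequency permutation array claim is immediate: the $i$-th block of $\alpha(t,\mathcal{I})$ is $\alpha(t,\rho_i)=(1^{t\rho_i},\dots,q^{t\rho_i})$, that is, the permutation $t\rho_i\in S_q$ written in passive form, so every letter of $Q$ occurs exactly once in that block of length $q$ and hence exactly $r$ times in the full codeword of length $rq$.

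For (i) and (ii) I would argue simultaneously via a block-diagonal group of automorphisms. For $t\in T$ let $y_t\in\Aut(H(rq,q))$ be the automorphism that acts on the $i$-th block by $x_{t\rho_i}=(t\rho_i,\dots,t\rho_i)$ and fixes the block structure, and put $\Diag(T,\mathcal{I}):=\{y_t:t\in T\}$. Since each $\rho_i$ is a homomorphism, $t\mapsto y_t$ is a homomorphism with kernel $\{t\in T:t\rho_i=1\text{ for all }i\}=K$, so $\Diag(T,\mathcal{I})\cong T/K$. Applying Lemma~\ref{start} in each block gives $\alpha(s,\mathcal{I})^{y_t}=\bigl(\alpha(s,\rho_1)^{x_{t\rho_1}},\dots,\alpha(s,\rho_r)^{x_{t\rho_r}}\bigr)=\alpha(st,\mathcal{I})$, so $\Diag(T,\mathcal{I})$ preserves $C(T,\mathcal{I})$ and acts transitively on it, while the stabiliser of $\alpha(1,\mathcal{I})$ is $\{y_t:\alpha(t,\mathcal{I})=\alpha(1,\mathcal{I})\}=\{y_t:t\in K\}$, the trivial subgroup of $\Diag(T,\mathcal{I})$. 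Hence the action is regular; in particular $|C(T,\mathcal{I})|=|\Diag(T,\mathcal{I})|=|T/K|$, proving (ii), and since $C(T,\mathcal{I})$ carries a transitive automorphism group it is distance invariant by the remark in Section~\ref{secdef}, proving (i).

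For (iii) I would first use distance invariance to reduce to $\delta(C(T,\mathcal{I}))=\min\{d(\alpha(1,\mathcal{I}),\alpha(t,\mathcal{I})):\alpha(t,\mathcal{I})\neq\alpha(1,\mathcal{I})\}$, and then use that Hamming distance is additive over the $r$ blocks together with the first assertion of Lemma~\ref{mindist1} to get $d(\alpha(1,\mathcal{I}),\alpha(t,\mathcal{I}))=\sum_{i=1}^{r}d(\alpha(1,\rho_i),\alpha(t,\rho_i))=\sum_{\rho\in\mathcal{I}}|\supp(t\rho)|$, which is the displayed equality. For the lower bound, observe that $\Rep_r(C(T,\rho))=C(T,(\rho,\dots,\rho))$ and that repetition scales every Hamming distance by $r$, so $\delta(\Rep_r(C(T,\rho)))=r\,\delta(C(T,\rho))$; then for a $t$ realising the minimum I would bound each of the $r$ summands below by $|\supp(t\rho)|\geq\delta(C(T,\rho))\geq\min_{\rho'\in\mathcal{I}}\delta(C(T,\rho'))$ and add the $r$ of them, obtaining $\sum_{\rho\in\mathcal{I}}|\supp(t\rho)|\geq r\cdot\min_{\rho'\in\mathcal{I}}\delta(C(T,\rho'))=\min_{\rho\in\mathcal{I}}\delta(\Rep_r(C(T,\rho)))$.

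I do not expect a genuine obstacle here: once $C(T,\mathcal{I})$ is recognised as a blockwise assembly of permutation codes, the regular action, the size formula and the distance formula are all coordinatewise consequences of Lemmas~\ref{start} and~\ref{mindist1}. The one point needing a little care is the termwise comparison in the lower bound of (iii): the content is simply that each block of a pair of distinct twisted codewords is already at least as far apart as the minimal degree of that block's representation permits, so the $r$ blockwise contributions cannot total less than the $r$ equal contributions of the worst repetition code among $\rho\in\mathcal{I}$. That equality holds when all the $\rho_i$ coincide, and that the inequality can be strict otherwise, are precisely what the examples in the rest of the paper establish.
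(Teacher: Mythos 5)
Your proposal is correct and follows essentially the same route as the paper: the same block-diagonal group $\Diag(T,\mathcal{I})$ acting regularly via Lemma~\ref{start} (giving distance invariance and the $|T/K|$ count), additivity of Hamming distance over the $r$ blocks combined with Lemma~\ref{mindist1} for the distance formula, and a termwise comparison of the summands with the minimal degrees for the lower bound, which is just a reorganisation of the paper's interchange of the two minima. No gaps beyond the harmless degenerate-case conventions (non-faithful $\rho$) that the paper itself adopts.
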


\begin{proof} Each codeword in $C(T,\mathcal{I})$ is an $r$-tuple of permutation codewords, so it is clear that
$C(T,\mathcal{I})$ is a frequency permutation array of length $rq$.  

(i) For $t\in T$ let $x(t,\mathcal{I})=(x_{t\rho_1},\ldots,x_{t\rho_r})\in\Diag(T,\rho_1)\times\cdots\times\Diag(T,\rho_r)$, and let
$\Diag(T,\mathcal{I})=\{x(t,\mathcal{I})\,:\,t\in T\}$.  Then $x(t,\mathcal{I})$
acts naturally on $\alpha(s,\mathcal{I})$ in the following way:  
\begin{align*}
\alpha(s,\mathcal{I})^{x(t,\mathcal{I})}&=(\alpha(s,\rho_1),\ldots,\alpha(s,\rho_r))^{(x_{t\rho_1},\ldots,x_{t\rho_r})}\\
&=(\alpha(s,\rho_1)^{x_{t\rho_1}},\ldots,\alpha(s,\rho_r)^{x_{t\rho_r}})\\
&=(\alpha(st,\rho_1),\ldots,\alpha(st,\rho_r))&\textnormal{(by Lemma \ref{start})}\\
&=\alpha(st,\mathcal{I}).
\end{align*}
As $T$ has a regular action on itself by right multiplication, we deduce that $\Diag(T,\mathcal{I})$ acts
regularly on $C(T,\mathcal{I})$.  Hence $C(T,\mathcal{I})$ is distance invariant.  

(ii)  From the proof of (i) it follows that $\alpha(s,\mathcal{I})=\alpha(t,\mathcal{I})$ 
if and only if $\alpha(st^{-1},\mathcal{I})=\alpha(1,\mathcal{I})$,
which holds if and only if $st^{-1}\in K$.  Hence the size of $C(T,\mathcal{I})$ is equal to the order of
the factor group $T/K$.  

(iii) For $t\in T^\#$ it is clear that the distance between $\alpha(1,\mathcal{I})$ and
$\alpha(t,\mathcal{I})$ in $H(rq,q)$ is equal to the sum of the distances between $\alpha(1,\rho)$ and
$\alpha(t,\rho)$ in $H(q,q)$ as $\rho$ varies over $\mathcal{I}$.  That is, 
\begin{equation}\label{general1}d(\alpha(1,\mathcal{I}),\alpha(t,\mathcal{I}))=\sum_{\rho\in\mathcal{I}}d(\alpha(1,\rho),\alpha(t,\rho))
=\sum_{\rho\in\mathcal{I}}|\supp(t\rho)|,\end{equation}
where the last equality follows from Lemma \ref{mindist1}.  Thus the distance between
$\alpha(1,\mathcal{I})$ and any codeword in $C(T,\mathcal{I})$ is minimised when this expression is minimised.  Consequently,
as $C(T,\mathcal{I})$ is distance invariant, it follows that $$\delta(C(T,\mathcal{I}))=\min_{t\in T^\#}\sum_{\rho\in\mathcal{I}}|\supp(t\rho)|.$$
To prove the inequality in the statement, we make the following observation.  
\begin{align*} \min_{t\in T^\#}\sum_{\rho\in\mathcal{I}}|\supp(t\rho)| &\geq \min_{t\in T^\#}\{r\cdot\min_{\rho\in\mathcal{I}}\{|\supp(t\rho)|\}\}\\
&=\min_{\rho\in\mathcal{I}}\{r\cdot \min_{t\in T^\#}\{|\supp(t\rho)|\}\}\\
&=\min_{\rho\in\mathcal{I}}\{r\cdot \delta(C(T,\rho))\}&\textnormal{(by Lemma \ref{mindist1})}\\
&=\min_{\rho\in\mathcal{I}}\{\delta(\Rep_r(C(T,\rho)))\}.
\end{align*}
\end{proof}

\begin{remark} (a) Consider the code $C(T,\mathcal{I})$ and let $K$ be as in Proposition \ref{distlowerbound} (ii).
Also let $\tilde{T}=T/K$, and for $\rho\in\mathcal{I}$ define $\tilde{\rho}:\tilde{T}\longrightarrow S_q$
given by $Kt\longmapsto t\rho$.  It is straightforward to check that $\tilde{\rho}$ is well defined and that $\ker\tilde{\rho}=\ker\rho/K$.  
By defining $\tilde{\mathcal{I}}=(\tilde{\rho}_1,\ldots,\tilde{\rho}_r)$, it follows
that $C(T,\mathcal{I})=C(\tilde{T},\tilde{\mathcal{I}})$.  Moreover, 
$$\tilde{K}=\cap_{\tilde{\rho}\in\tilde{\mathcal{I}}}\ker\tilde{\rho}=\cap_{\rho\in\mathcal{I}}(\ker\rho/K)=(\cap_{\rho\in\mathcal{I}}\ker\rho)/K=1.$$
Thus, for any twisted permutation code, by replacing $T$ with $T/K$ we can assume
that $K=1$ and that $|C(T,\mathcal{I})|=|T|$.

(b) The lower bound in Proposition \ref{distlowerbound} (iii) can be equal to zero.  For example, if for 
some representation $\rho'\in\mathcal{I}$ it holds that $\ker\rho'=T$ then 
$\min_{\rho\in\mathcal{I}}\{\delta(\Rep_r(C(T,\rho)))\}=0$.  This is because $\Rep_r(C(T,\rho'))$
consists of just one codeword.\end{remark}

In Sections \ref{secex} and \ref{seccomp}, we give examples of twisted permutation codes with minimum
distance strictly greater than the lower bound in Proposition \ref{distlowerbound} (iii).
However, this lower bound can be attained by letting $\mathcal{I}=(\rho,\ldots,\rho)$ for some representation $\rho:T\longrightarrow S_q$, 
because as we said above, in this case $C(T,\mathcal{I})=\Rep_r(C(T,\rho))$.  
The following result shows that this lower bound can also be attained in a slightly more general setting.  

\begin{lemma}\label{diffreps}  Let $\mathcal{I}=(\rho_1,\ldots,\rho_r)$ be an $r$-tuple of actions of $T$ of degree $q$.  
Suppose that $|\supp(t\rho_i)|=|\supp(t\rho_j)|$
for all $i,j$ and for all $t\in T$.  Then $C(T,\mathcal{I})$ has the same inner distance distribution as $\Rep_r(C(T,\rho_i))$
for $i=1,\ldots,r$.  In particular $\delta(C(T,\mathcal{I}))$ achieves the lower bound in Proposition \ref{distlowerbound} (iii).  
\end{lemma}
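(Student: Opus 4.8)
The plan is to show the two codes have the same inner distance distribution by comparing distance distributions from a single fixed codeword, which is legitimate because both codes are \emph{distance invariant} by Proposition \ref{distlowerbound}(i). First I would record the key consequence of the hypothesis: since $|\supp(t\rho_i)|=0$ exactly when $t\in\ker\rho_i$, the assumption $|\supp(t\rho_i)|=|\supp(t\rho_j)|$ for all $t$ forces $\ker\rho_i=\ker\rho_j$ for all $i,j$. Hence the group $K=\cap_{\rho\in\mathcal{I}}\ker\rho$ of Proposition \ref{distlowerbound}(ii) equals $\ker\rho_i$ for every $i$, and by that proposition $|C(T,\mathcal{I})|=|T/K|=|T/\ker\rho_1|=|C(T,\rho_1)|=|\Rep_r(C(T,\rho_1))|$. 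Consequently $\alpha(s,\mathcal{I})=\alpha(t,\mathcal{I})$ if and only if $\alpha(s,\rho_1)=\alpha(t,\rho_1)$, so $\alpha(t,\mathcal{I})\mapsto\rep_r(\alpha(t,\rho_1))$ is a well-defined bijection $C(T,\mathcal{I})\to\Rep_r(C(T,\rho_1))$.

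Next I would compare distances to the base codeword. By distance invariance it suffices to compare $\kappa(\alpha(1,\mathcal{I}))$ with $\kappa(\rep_r(\alpha(1,\rho_1)))$ (computed in the respective codes). For $t\in T$, equation \eqref{general1} gives $d(\alpha(1,\mathcal{I}),\alpha(t,\mathcal{I}))=\sum_{\rho\in\mathcal{I}}|\supp(t\rho)|$, which by hypothesis equals $r\,|\supp(t\rho_1)|$; and the same formula applied to the $r$-tuple $(\rho_1,\ldots,\rho_1)$ gives $d(\rep_r(\alpha(1,\rho_1)),\rep_r(\alpha(t,\rho_1)))=r\,|\supp(t\rho_1)|$ as well. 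So the bijection above carries each codeword at distance $k$ from $\alpha(1,\mathcal{I})$ to a codeword at distance $k$ from $\rep_r(\alpha(1,\rho_1))$; therefore $a_k(\alpha(1,\mathcal{I}))=a_k(\rep_r(\alpha(1,\rho_1)))$ for all $k$, whence $\kappa(C(T,\mathcal{I}))=\kappa(\Rep_r(C(T,\rho_1)))$. The hypothesis is symmetric in the $\rho_i$, so the same conclusion holds with $\rho_1$ replaced by any $\rho_i$.

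Finally, the minimum distance of a code is the least positive index at which its inner distance distribution is nonzero, so the equality of inner distance distributions yields $\delta(C(T,\mathcal{I}))=\delta(\Rep_r(C(T,\rho_i)))$ for every $i$. Moreover, by Lemma \ref{mindist1} each $\delta(\Rep_r(C(T,\rho_i)))=r\cdot\min_{t\in T^\#}|\supp(t\rho_i)|$ is independent of $i$ under the hypothesis, so $\min_{\rho\in\mathcal{I}}\{\delta(\Rep_r(C(T,\rho)))\}=\delta(\Rep_r(C(T,\rho_1)))=\delta(C(T,\mathcal{I}))$, i.e.\ the lower bound of Proposition \ref{distlowerbound}(iii) is attained. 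The routine parts are the well-definedness of the bijection and the bookkeeping with \eqref{general1}; the one point that genuinely needs care is the first — that the hypothesis forces all kernels $\ker\rho_i$ to coincide, so that $C(T,\mathcal{I})$ and $\Rep_r(C(T,\rho_1))$ really do have the same cardinality. Without this the map between them could fail to be injective, and the distance distributions could differ.
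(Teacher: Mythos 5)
Your proof is correct and takes essentially the same route as the paper: both arguments rest on distance invariance from Proposition \ref{distlowerbound}(i) together with the distance formula \eqref{general1}, which under the hypothesis makes every distance from the base codeword equal to $r\,|\supp(t\rho_i)|$ in $C(T,\mathcal{I})$ and in $\Rep_r(C(T,\rho_i))$ alike, so the two distributions coincide term by term. Your explicit observation that all kernels coincide (hence the two codes are in a distance-compatible bijection and have the same size) is a careful touch that the paper handles only implicitly, by counting codewords via group elements, in effect assuming $K=1$ as justified in the remark following Proposition \ref{distlowerbound}; it is a clarification rather than a different method.
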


\begin{proof}  Let $C=C(T,\mathcal{I})$.  By Proposition \ref{distlowerbound}, $C$ is distance invariant.  
Thus the inner distance distribution of $C$ is equal to the distance distribution from $\alpha(1,\mathcal{I})$.  That is, 
the $k$th entry of $\kappa(C)$ is equal to $|\Gamma_k(\alpha(1,\mathcal{I}))\cap C|$.  It follows
from (\ref{general1}) that  
\begin{equation}\label{som1}\alpha(t,\mathcal{I})\in\Gamma_k(\alpha(1,\mathcal{I}))\cap C \iff \sum_{\rho\in\mathcal{I}} |\supp(t\rho)|=k.\end{equation}
As $|\supp(t\rho_i)|=|\supp(t\rho_j)|$ for all $i,j$, the expression on the right of \eqref{som1} 
is equal to $r|\supp(t\rho_i)|$ for each $i=1,\ldots,r$.  Thus
the $k$th entry of $\kappa(C)$ is equal to $|\{t\in T\,:\,|\supp(t\rho_i)|=k/r\}|$ for each~$i=1,\ldots,r$.  

Now, for $i\in\{1,\ldots,r\}$ let $\mathcal{I}_{\rho_i}=(\rho_i,\ldots,\rho_i)$, and let $C'=C(T,\mathcal{I}_{\rho_i})=\Rep_r(C(T,\rho_i))$.
Again, because $C'$ is distance invariant, the $k$th entry of $\kappa(C')$ 
is equal to $|\Gamma_k(\alpha(1,\mathcal{I}_{\rho_i}))\cap C'|$ and 
$$\alpha(t,\mathcal{I}_{\rho_i})\in\Gamma_k(\alpha(1,\mathcal{I}_{\rho_i}))\cap C' 
\iff \sum_{j=1}^r |\supp(t\rho_i)|=r|\supp(t\rho_i)|=k.$$
Hence the $k$th entry of $\kappa(C')$ is equal to $|\{t\in T\,:\,|\supp(t\rho_i)|=k/r\}|$, as above.  
\end{proof}

\noindent In Section \ref{secaff} we give an example of an infinite family of twisted permutation codes,
each generated by a set $\mathcal{I}$ of $r$ representations that are pairwise distinct, and 
whose minimum distance achieves the lower bound in Proposition \ref{distlowerbound} (iii).  
 
\section{Examples of Twisted Permutation Codes}\label{secex}

The first examples of twisted permutation codes that we introduce 
are constructed from finite $2$-transitive groups of almost simple type.  It is well known
that a finite $2$-transitive group of almost simple type has at most $2$ inequivalent actions,
and the groups with precisely $2$ actions are listed in Table \ref{table1}, which is taken from \cite[Table 7.4]{pcam}.
We consider each group $T$ from Table \ref{table1} as a permutation group in its natural action, so $q$ is equal
to the degree of $T$.  For each line in Table \ref{table1} it holds that the normaliser in $S_q$ of $T$ is an index $2$ subgroup of $\Aut(T)$.  
Thus, we let $\mathcal{I}=(\rho_1,\rho_2)$ 
where $\rho_1$ is the identity map and $\rho_2$ is an outer automorphism of $T$ such that $\rho_2^2=\rho_1$ 
(see Remark \ref{remaut2}), and we consider the code $C(T,\mathcal{I})$.
\begin{table}[h]
\centering
\begin{tabular}{c c c}
\hline\hline
Degree & $T$ & Conditions \\
\hline
6&$S_6, A_6$&--\\
11&$\PSL(2,11)$&--\\
12&$M_{12}$&--\\
15&$A_7$&--\\
176&$HS$&--\\
$(\ell^n-1)/(\ell-1)$&$\PSL(n,\ell)\leq T\leq \PGaL(n,\ell)$&$n>2$\\
\hline
\end{tabular}
\caption{$2$-transitive almost simple groups with two inequivalent actions.}\label{table1}
\end{table}

\begin{remark}\label{remaut2} For each $T$ in Table \ref{table1}, there exists an outer automorphism of $T$ 
  that is an involution.  For $T$ in the last line of Table \ref{table1}, the 
  automorphism induced by the inverse transpose map is the required outer automorphism.    
  For each of the other groups in Table \ref{table1}, we consult the character
  table of $T$ in the ATLAS \cite{at}. For each $T$ we find a conjugacy class of elements in
  $\Aut(T)\backslash\overline{N_{S_q}(T)}$ which has elements of order $2$.  
  (Here $\overline{N_{S_q}(T)}$ denotes the subgroup of $\Aut(T)$ induced by $N_{S_q}(T)$.) 
\end{remark}

\subsection{The Symmetric Group $T=S_6$}\label{secs6} By referring to the character table of $S_6$ in the ATLAS, we 
can determine the number of fixed points in each action for each conjugacy class of $S_6$.
By subtracting this from the degree of $S_6$, we determine, again for each action, the size of the support for
the elements in each conjugacy class of $S_6$. 
\begin{table}[h]
\centering
\begin{tabular}{l|c|c|c|c|c|c|c|c|c|c|c}
\hline\hline
Conjugacy Class&1A&2A&2B&2C&3A&3B&4A&4B&5AB&6A&6B\\
\hline
$|\supp(t\rho_1)|$&0&4&2&6&3&6&6&4&5&5&6\\
$|\supp(t\rho_2)|$&0&4&6&2&6&3&6&4&5&6&5\\\hline
Sum of supports&0&8&8&8&9&9&12&8&10&11&11\\\hline
\end{tabular}
\caption{The Symmetric Group $S_6$}
\label{s6}
\end{table} 
We give this information in Table \ref{s6}.
By summing the sizes of the supports for each conjugacy class, it follows from Proposition \ref{distlowerbound}
that $C(S_6,\mathcal{I})$ has minimum
distance $8$.  The minimal degree of $S_6$ is $2$ in both actions, and so $\Rep_2(C(S_6,\rho_i))$ has minimum distance $4$ for $i=1,2$.  
Thus, $C(S_6,\mathcal{I})$ has the same size and length as $\Rep_2(C(S_6,\rho_i))$ (for $i=1,2$), but has double the 
minimum distance.  In particular, $\delta(C(S_6,\mathcal{I}))$ is greater than the lower bound in Proposition \ref{distlowerbound}~(iii).  

\subsection{The Alternating Group $T=A_6$}\label{seca6}  Again, by referring to the ATLAS, we determine, for each action, the size of the
support for the elements in each conjugacy class of $A_6$.  We present this information in Table \ref{a6}.
It follows from Proposition \ref{distlowerbound} that we can read off the minimum distance of $C(A_6,\mathcal{I})$ from
Table \ref{a6}, which is $8$.  The minimal degree of
$A_6$ in both actions is $3$, so $\Rep_2(C(A_6,\rho_i))$ has minimum distance $6$ for $i=1,2$.  Thus $C(A_6,\mathcal{I})$
is strictly greater than the lower bound in Proposition \ref{distlowerbound} (iii).  
\begin{table}[h]
\centering
\begin{tabular}{l|c|c|c|c|c|c|c}
\hline\hline
Conjugacy Class&1A&2A&3A&3B&4A&5A&5B\\
\hline
$|\supp(t\rho_1)|$&0&4&3&6&6&5&5\\
$|\supp(t\rho_2)|$&0&4&6&3&6&5&5\\\hline
Sum of supports&0&8&9&9&12&10&10\\\hline
\end{tabular}
\caption{The Alternating Group $A_6$}
\label{a6}
\end{table} 

\subsection{The Mathieu Group $T=M_{12}$}\label{secm12}  In Table \ref{m12} we give the size of the support, for each action, of the elements 
in each conjugacy class of $M_{12}$ \cite{at}.  By Proposition \ref{distlowerbound}, we deduce
that $\delta(C(M_{12},\mathcal{I}))=16$. This is equal to the minimum distance of $\Rep_2(C(M_{12},\rho_i)$ for $i=1,2$ 
as the outer automorphism $\rho_2$ does not change the cycle structure of the elements in the conjugacy class $\mathrm{2B}$, for which 
the size of the support is equal to the minimal degree of $M_{12}$.
\begin{table}[h]
\centering
\begin{tabular}{l|c|c|c|c|c|c|c|c|c|c|c|c|c|c|c}
\hline\hline
Conjugacy Class&1A&2A&2B&3A&3B&4A&4B&5A&6A&6B&8A&8B&10A&11A&11B\\
\hline
$|\supp(t\rho_1)|$&0&12&8&9&12&12&8&10&12&11&12&10&12&11&11\\
$|\supp(t\rho_2)|$&0&12&8&9&12&8&12&10&12&11&10&12&12&11&11\\\hline
Sum of supports&0&24&16&18&24&20&20&20&24&22&22&22&24&22&22\\\hline
\end{tabular}
\caption{The Mathieu Group $M_{12}$}
\label{m12}
\end{table} 
However, the codes $C(M_{12},\mathcal{I})$ and
$\Rep_2(C(M_{12},\rho_i))$ for $i=1$~or~$2$ are inequivalent.  This can be seen by considering
the distance distribution of each code.  By Proposition \ref{distlowerbound}, 
both $C(M_{12},\mathcal{I})$ and $\Rep_2(C(M_{12},\rho_i))$ are distance
invariant.  Therefore, the respective inner distance distribution is equal to the distance distribution from any codeword. 
In $C(M_{12},\mathcal{I})$, the codewords that are at distance $16$ from $\alpha(1,\mathcal{I})$ are the codewords associated
with elements from the conjugacy class $\mathrm{2B}$.  Hence, in the inner distance distribution of $C(M_{12},\mathcal{I})$,
$a_{16}=495$, the size of the conjugacy class $\mathrm{2B}$ in $M_{12}$.  However, in $\Rep_2(C(M_{12},\rho_i))$, the codewords
that are at distance $16$ from $(\alpha(1,\rho_i),\alpha(1,\rho_i))$ are precisely the elements from the conjugacy classes $\mathrm{2B}$
and $\mathrm{4B}$ or $\mathrm{4A}$ respectively for $i=1$ or $2$.  Hence, in this case $a_{16}$ is equal
to the sum of the sizes of the conjugacy classes $\mathrm{2B}$ and $\mathrm{4B}$ or $\mathrm{4A}$ respectively, which is equal to $3465$ 
(note classes $\mathrm{4B}$ and $\mathrm{4A}$ contain the same number of elements).  
The inner distribution for each code can be calculated in this way and is given in Table \ref{distdist}.  Note, we only
give the non-zero terms of the inner distribution.
\begin{table}[h]
\centering
\begin{tabular}{l|c|c|c|c|c|c}
\hline\hline
$\kappa(C)$&$a_0$&$a_{16}$&$a_{18}$&$a_{20}$&$a_{22}$&$a_{24}$\\
\hline
$C(M_{12},\mathcal{I})$&1&495&1760&15444&56880&20460\\
$\Rep_2(C(M_{12},\rho_i))$&1&3465&1760&21384&33120&35310\\\hline
\end{tabular}
\caption{Inner distance distributions}
\label{distdist}
\end{table} 

In the remaining cases from Table \ref{table1}, we claim that $|\supp(t\rho_1)|=|\supp(t\rho_2)|$ for each $t\in T$.  Consequently
by Lemma \ref{diffreps}, $C(T,\mathcal{I})$ has the same inner distance distribution as $\Rep_2(C(T,\rho_i))$ for $i=1,2$,
and therefore the same minimum distance.  Hence, in these cases the code $C(T,\mathcal{I})$ has a minimum distance
that is equal to the lower bound in Proposition \ref{distlowerbound} (iii) even though $\rho_1\neq\rho_2$.    

\subsection{The Groups $T=\PSL(2,11),A_7, HS$}\label{secspor}  By referring to the ATLAS, we
see that in each case the permutation character of the action generated by $\rho_1$ is the same as 
the permutation character of the action generated by $\rho_2$.  
In particular, for $T=\PSL(2,11)$, $A_7$ or $HS$, the permutation character for both actions 
is equal to $\mathrm{1A}+\mathrm{10B}$, $\mathrm{1A}+\mathrm{14B}$, or $\mathrm{1A}+\mathrm{175A}$ respectively \cite[p.7,10,80]{at}.  
Hence $|\fix(t\rho_1)|=|\fix(t\rho_2)|$ for all $t\in T$ and so $|\supp(t\rho_1)|=|\supp(t\rho_2)|$ for all $t\in T$.

\subsection{The Projective Linear Groups $\PSL(n,\ell)\leq T\leq \PGaL(n,\ell)$}\label{secpl}  In its natural action $T$ is 
acting $2$-transitively on $\mathcal{P}$, the set of $(\ell^n-1)/(\ell-1)$ one-dimensional subspaces of $V=\F_\ell^n$.  
Moreover, under $\rho_2$, the action of $T$ is permutationally isomorphic to the action of 
$T$ on $\mathcal{B}$, the set of $(\ell^n-1)/(\ell-1)$ hyperplanes of $V$.  It 
holds that each hyperplane is uniquely determined by the set of one-dimensional subspaces it contains.  Consequently 
$\mathcal{D}=(\mathcal{P},\mathcal{B})$ forms a symmetric $2$-design, and in particular, $T$ is a
group of automorphisms of $\mathcal{D}$.  Consequently, the cycle structure of $t\rho_1$ is 
the same as that for $t\rho_2$, for all $t\in T$, see \cite[Cor. 3.2]{lander}.  
Hence, $|\supp(t\rho_1)|=|\supp(t\rho_2)|$ for all $t\in T$.

An open question for the cases in Sections \ref{secspor} and \ref{secpl} is whether $C(T,\mathcal{I})$
is equivalent to $\Rep_r(C(T,\rho_i))$, for $i=1$ or $2$, under the automorphisms of the Hamming graph.  

\subsection{Neighbour transitivity}\label{secneitran}  Let $C$ be a code in $H(m,q)$.  For any vertex $\nu$ in $H(m,q)$ we 
let $d(\nu,C)=\min\{d(\nu,\beta)\,:\,\beta\in C\}$ and $C_i=\{\nu\,:\,d(\nu,C)=i\}$.  
We call $C_1$ the \emph{set of neighbours of $C$}, and if there exists 
a group of automorphisms $G$ such that both $C$ and $C_1$ are $G$-orbits then we say that $C$ is \emph{$G$-neighbour
transitive}, or simply \emph{neighbour transitive}.  

Throughout this section $T$ is one of the groups from Table \ref{table1}, and
$C(T,\mathcal{I})$ is the code generated by $\mathcal{I}=(\rho_1,\rho_2)$ where $\rho_1$ is
the identity map and $\rho_2$ is an outer automorphism of $T$ such that $\rho_2^2=\rho_1$.  
As we mentioned in the introduction, one of the motivations for considering 
twisted permutation codes comes from the family of neighbour transitive permutation codes classified in 
\cite{diagnt} and their subsequent neighbour transitive repetition constructions.  In this section we prove
the following for the codes presented above.

\begin{theorem}\label{ntthm} For each $T$ in Table \ref{table1}, the code $C(T,\mathcal{I})$ is neighbour transitive.    
\end{theorem}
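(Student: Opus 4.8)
The plan is to exhibit a single group $G\leq\Aut(H(rq,q))$ with $r=2$ that acts transitively on $C=C(T,\mathcal{I})$ and transitively on the set of neighbours $C_1$. Recall from Lemma~\ref{start} and Proposition~\ref{distlowerbound}(i) that $\Diag(T,\mathcal{I})$ already acts regularly on $C$, so the first component of $G$ is available for free; the work is to enlarge it so that the larger group still stabilises $C$ setwise while becoming transitive on $C_1$. The natural candidate is the group generated by $\Diag(T,\mathcal{I})$ together with the ``row automorphisms'' $\sigma(t\rho_i)\in L$ from Lemma~\ref{start} (which realise left multiplication) and, crucially, an involution $\tau\in L\cong S_{rq}$ realising the outer automorphism that interchanges the two actions $\rho_1,\rho_2$ — this is exactly the involution furnished by Remark~\ref{remaut2}, and it is the reason we restricted to the groups of Table~\ref{table1}. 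One checks that $\tau$ together with the appropriate element of $B$ normalises $C$, using that $\rho_2^2=\rho_1$ and that $\rho_2$ is induced by conjugation in $S_q$.

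First I would set up $G$ precisely: let $G_0=\langle\Diag(T,\mathcal{I}),\,\{\sigma(t\rho_1)\sigma(t\rho_2):t\in T\}\rangle$, which by Lemma~\ref{start} realises the two-sided action of $T$ on each coordinate block and is transitive on $C$, and then put $G=\langle G_0,\,\hat\tau\rangle$ where $\hat\tau$ is the block-swapping automorphism just described. I would verify $C^G=C$ by direct computation on a generic codeword $\alpha(t,\mathcal{I})=(\alpha(t,\rho_1),\alpha(t,\rho_2))$: swapping blocks and simultaneously applying the outer automorphism sends this to $\alpha(t',\mathcal{I})$ for a suitable $t'$, because $\rho_2$ is an automorphism of $T$ of order dividing $2$. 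Then $C$ is a $G$-orbit. This part is essentially bookkeeping with the actions already analysed in Section~\ref{secconst}.

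The substantive step is transitivity on $C_1$. A neighbour $\nu\in C_1$ differs from a unique closest codeword in exactly one of the $rq$ entries; by $G$-transitivity on $C$ I may assume that closest codeword is $\alpha(1,\mathcal{I})$, so $\nu$ agrees with $\alpha(1,\mathcal{I})$ in all but one entry, say entry $(i,b)$ (block $b\in\{1,\ldots,r\}$, position $i\in Q$), where it takes some value $j\neq i$. I would use the stabiliser $G_{\alpha(1,\mathcal{I})}$ — which, since $\Diag(T,\mathcal{I})$ is regular, is $\langle\sigma$-part $\rangle\rtimes\langle\hat\tau\rangle$ and induces on the coordinates a group containing two copies of the natural $2$-transitive action of $T$ on $Q$ (one per block), fused by $\hat\tau$ — to move $(i,b)$ to any other coordinate: $2$-transitivity of $T$ on $Q$ handles moving $i$ within a block and simultaneously the changed value $j$, while $\hat\tau$ moves between blocks. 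Here I must check that $\nu$ really is at distance $1$ from $C$ and not closer to some other codeword; this follows from the minimum distance bound $\delta_{\tw}\geq\delta_{\rep}\geq 4$ (indeed $\geq 8$ for the sporadic cases and $\geq 4$ for the $\PSL$ family, since the minimal degree is at least $2$), so the balls of radius $1$ about codewords are disjoint and a vertex at distance $1$ from $\alpha(1,\mathcal{I})$ has that as its unique nearest codeword.

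The main obstacle I anticipate is verifying that the coordinate action of $G_{\alpha(1,\mathcal{I})}$ is genuinely $2$-transitive on $Q$ \emph{within a single block after fixing the changed letter} — i.e.\ that we have enough room to send the pair $(i,j)$ with $i\neq j$ to any other such pair. The natural action of $T$ on $Q$ is $2$-transitive by hypothesis (every group in Table~\ref{table1} is $2$-transitive in its natural representation), which gives transitivity on ordered pairs of distinct points and hence resolves this; but one must be careful that the relevant subgroup of $G_{\alpha(1,\mathcal{I})}$ really projects onto all of $T$ in a block and is not cut down. Since $\Diag(T,\mathcal{I})$ is regular on $C$, the point stabiliser contains the ``left-translation'' copy $\{\sigma(t\rho_1)\sigma(t\rho_2)\}$, which projects isomorphically onto $T$ in each block (as $\ker\rho_i$ is trivial or central and in any case the induced permutation action is the natural $2$-transitive one), so $2$-transitivity is inherited. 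Putting these together, $G$ is transitive on both $C$ and $C_1$, proving the theorem.
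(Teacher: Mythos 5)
Your overall strategy is the paper's: make $\Diag(T,\mathcal{I})$ transitive on the code, show the stabiliser of $\alpha(1,\mathcal{I})$ is transitive on the sphere $\Gamma_1(\alpha(1,\mathcal{I}))$ in $H(2q,q)$ by acting $2$-transitively within each block and swapping the two blocks, then translate arbitrary neighbours of $C$ back to neighbours of $\alpha(1,\mathcal{I})$. However, there is a concrete slip in the step where you identify the stabiliser. The elements $\sigma(t\rho_1)\sigma(t\rho_2)$ are pure top-group (position) permutations and, by Lemma~\ref{start}, they act as \emph{left translations}: they send $\alpha(1,\mathcal{I})$ to $\alpha(t^{-1},\mathcal{I})$, so they do not lie in $G_{\alpha(1,\mathcal{I})}$. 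Worse, even if they did, a pure position permutation leaves the entry values untouched, so it could never move the pair (position $i$, changed letter $j$) ``simultaneously'', which is exactly what your $2$-transitivity argument needs. The elements that actually fix $\alpha(1,\mathcal{I})$ and do the job are the products $x(t,\mathcal{I})\,\sigma(t\rho_1)\sigma(t\rho_2)$ (right translation times left translation, i.e.\ conjugation); these lie in your $G_0$ since it contains both factors, and they send the neighbour $\nu(\alpha(1,\rho_b),i,j)$ in block $b$ to $\nu(\alpha(1,\rho_b),i^{t\rho_b},j^{t\rho_b})$. With this correction your argument goes through and coincides with the paper's: the paper's stabilising elements are $a(y,\mathcal{I})=\bigl(x_{y\rho_1}\sigma(y\rho_1),x_{y\rho_2}\sigma(y\rho_2)\bigr)$ with $y\in N_{S_q}(T)$ (conjugation by the normaliser; taking $y\in T$ suffices because $T$ itself is $2$-transitive), which give transitivity on the within-block neighbours, and the block swap $\sigma$ fuses the two block-orbits (Lemma~\ref{ntlem}); for the swap no $B$-part is needed, since $\alpha(1,\rho_2)=\alpha(1,\rho_1)$ and $\rho_2^2=\rho_1$.

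Two smaller remarks. Your appeal to $\delta_{\tw}\geq\delta_{\rep}\geq 4$ to get disjoint radius-$1$ balls is correct but unnecessary: once the stabiliser is transitive on the entire sphere $\Gamma_1(\alpha(1,\mathcal{I}))$ and $G\leq\Aut(C)$ (hence preserves $C_1$), any two elements of $C_1$ can be translated to points of that sphere and then mapped to each other, which is exactly how the paper concludes. Also, your description of $G_{\alpha(1,\mathcal{I})}$ as ``$\langle\sigma\textnormal{-part}\rangle\rtimes\langle\hat\tau\rangle$'' should be replaced by the conjugation subgroup $\{x(t,\mathcal{I})\,\sigma(t\rho_1)\sigma(t\rho_2):t\in T\}$ (or the paper's $A(T,\mathcal{I})$) extended by the swap.
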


Before we prove Theorem \ref{ntthm}, we first show that
any automorphism of $T$ defines an automorphism of $N_{S_q}(T)$.  
We define the following homomorphism:     
\begin{equation}\label{tashom}\begin{array}{c c c c}  
     \vartheta:&N_{S_q}(T)&\longrightarrow&\Aut(T)\\ 
&y&\longmapsto&\overline{y}
\end{array}\end{equation} where $t^{\overline{y}}=y^{-1}ty$ for all $t\in T$, and we denote the
image of $N_{S_q}(T)$ by $\overline{N_{S_q}(T)}$.   
Since $T$ is acting $2$-transitively, and therefore primitively, 
it follows that $\ker(\vartheta)=C_{S_q}(T)=1$ \cite[Theorem 4.2A]{dixmort}.  Hence $N_{S_q}(T)\cong\overline{N_{S_q}(T)}$. 
For each group in Table \ref{table1} it is known that $\overline{N_{S_q}(T)}$ 
is a subgroup of index $2$ in $\Aut(T)$, and therefore is normal in $\Aut(T)$.  
Thus for $\rho\in\Aut(T)$ and $y\in N_{S_q}(T)$, $\rho^{-1}\overline{y}\rho\in\overline{N_{S_q}(T)}$.
Moreover, because $N_{S_q}(T)\cong \overline{N_{S_q}(T)}$, there exists a unique $y'\in N_{S_q}(T)$ such 
that $\rho^{-1}\overline{y}\rho=\overline{y'}$.  Thus, for $\rho\in\Aut(T)$,  $\hat{\rho}:N_{S_q}(T)\longmapsto N_{S_q}(T)$ given by 
\begin{equation}\label{normact} y\hat{\rho}=\vartheta^{-1}(\rho^{-1}\overline{y}\rho)=y'\end{equation} is a well defined
automorphism of $N_{S_q}(T)$.  We note that \eqref{normact} implies $\overline{y\hat{\rho}}=\rho^{-1}\overline{y}\rho$.  To simplify the notation,
for $y\in N_{S_q}(T)$ we write $y\rho$ for $y\hat{\rho}$, and regard $\rho$ as a representation of $N_{S_q}(T)$.  

The code $C(T,\mathcal{I})$ is contained in the vertex set of $\Gamma^{2}=H(2q,q)$, 
which we can identify with the set of arbitrary $2$-tuples of vertices from $\Gamma=H(q,q)$.
Thus, given arbitrary automorphisms $x,y\in\Aut(\Gamma)$, we let $(x,y)\in\Aut(\Gamma)\times\Aut(\Gamma)$ 
act on the vertices of $\Gamma^{2}$ in the following way: 
\begin{equation}\label{2act}(\alpha_1,\alpha_2)^{(x,y)}=(\alpha^x_1,\alpha^y_2),\end{equation} where $\alpha_1,\alpha_2\in V(\Gamma)$. 
We now construct a group of automorphisms of $C(T,\mathcal{I})$ that stabilises $\alpha(1,\mathcal{I})$.  
To do this we first
construct an automorphism of $C(T,\rho)$, for any $\rho\in\Aut(T)$, in $\Gamma=H(q,q)$. Now $\rho$
defines an automorphism of $N_{S_q}(T)$, so $y\rho\in N_{S_q}(T)$ for $y\in N_{S_q}(T)$, and we let
$x_{y\rho}=(y\rho,\ldots,y\rho)\in B\cong S_q^q$, $\sigma(y\rho)\in L\cong S_q$ and $a(y,\rho)=x_{y\rho}\sigma(y\rho)\in\Aut(\Gamma)$.
Suppose that $i^{y\rho}=j$.  Then it follows that 
$$\alpha(t,\rho)^{a(y,\rho)}|_j=i^{t\rho y\rho}=j^{(y\rho)^{-1}t\rho y\rho}=j^{(y^{-1}ty)\rho},$$ and hence
\begin{equation}\label{codeact}\alpha(t,\rho)^{a(y,\rho)}=\alpha(y^{-1}ty,\rho).\end{equation}  
We now define $$A(T,\mathcal{I})=\{a(y,\mathcal{I})=(a(y,\rho_1),a(y,\rho_2))\,:\,y\in N_{S_q}(T)\},$$ where
we regard $\rho_1$, $\rho_2$ as representations of $N_{S_q}(T)$.  
Allowing $A(T,\mathcal{I})$ to act on vertices of $\Gamma^2$ as in \eqref{2act}, 
it follows from \eqref{codeact} that $$\alpha(t,\mathcal{I})^{a(y,\mathcal{I})}=\alpha(y^{-1}ty,\mathcal{I})$$
for all $t\in T$.  As $y\in N_{S_q}(T)$ we deduce that
$A(T,\mathcal{I})$ is a group of automorphisms $C(T,\mathcal{I})$ that stabilises 
$\alpha(1,\mathcal{I})$.

Let $\sigma$ be the automorphism of $\Gamma^{2}$ that maps $(\alpha_1,\alpha_2)$
to $(\alpha_2,\alpha_1)$ for all $\alpha_1,\alpha_2\in V(\Gamma)$.  We observe 
that $\alpha(t,\rho)=\alpha(t\rho,\rho_1)$ for any $\rho\in\Aut(T)$ and
any $t\in T$ (recall $\rho_1$ is the identity automorphism).
Hence, recalling that $\rho_2^2=\rho_1$, it follows that
$$\alpha(t,\mathcal{I})^\sigma=(\alpha(t,\rho_2),\alpha(t,\rho_1))=(\alpha(t\rho_2,\rho_1),\alpha(t\rho_2,\rho_2))=\alpha(t\rho_2,\mathcal{I}).$$
Thus $\sigma$ is also an automorphism of $C(T,\mathcal{I})$ that stabilises $\alpha(1,\mathcal{I})$.  

\begin{lemma}\label{ntlem} Let $H=\langle A(T,\mathcal{I}),\sigma \rangle$.  Then $H$ acts transitively
on $\Gamma^2_1(\alpha(1,\mathcal{I}))$.  
\end{lemma}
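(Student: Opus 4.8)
The plan is to understand the structure of $\Gamma^2_1(\alpha(1,\mathcal{I}))$, the set of neighbours of the codeword $\alpha(1,\mathcal{I})$ in $H(2q,q)$, and then show that the group $H=\langle A(T,\mathcal{I}),\sigma\rangle$ already contains enough automorphisms to move any one such neighbour to any other. A vertex $\nu$ at distance $1$ from $\alpha(1,\mathcal{I})$ differs from it in exactly one coordinate. Writing $\alpha(1,\mathcal{I})=(\alpha(1,\rho_1),\alpha(1,\rho_2))$, this altered coordinate lies either in the first block (positions $1,\ldots,q$) or the second block (positions $q+1,\ldots,2q$). So $\Gamma^2_1(\alpha(1,\mathcal{I}))$ splits as $\Gamma_1(\alpha(1,\rho_1))\times\{\alpha(1,\rho_2)\}$ together with $\{\alpha(1,\rho_1)\}\times\Gamma_1(\alpha(1,\rho_2))$. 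First I would use $\sigma$ (which by the computation preceding the lemma swaps the two blocks and fixes $\alpha(1,\mathcal{I})$, since $\alpha(1,\mathcal{I})^\sigma=\alpha(1\rho_2,\mathcal{I})=\alpha(1,\mathcal{I})$) to reduce to showing that the stabiliser of the second block acts transitively on $\Gamma_1(\alpha(1,\rho_1))\times\{\alpha(1,\rho_2)\}$, i.e.\ effectively that $A(T,\mathcal{I})$ restricted to the first coordinate acts transitively on $\Gamma_1(\alpha(1,\rho_1))$ inside $H(q,q)$.

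Next I would identify $\Gamma_1(\alpha(1,\rho_1))$ with a concrete set. A neighbour of $\alpha(1,\rho_1)=(1,2,\ldots,q)$ is obtained by changing one entry, say changing the value in position $i$ from $i$ to some $j\neq i$; so neighbours correspond to pairs $(i,j)$ with $i,j\in Q$, $i\neq j$, giving $q(q-1)$ of them. The action of $a(y,\rho_1)=x_{y}\sigma(y)$ for $y\in N_{S_q}(T)$ on such a ``point'' $(i,j)$ should work out, using the formula $\alpha(t,\rho)^{a(y,\rho)}|_j = j^{(y^{-1}ty)\rho}$ and the explicit description of the $B$-part and $L$-part, to send the neighbour labelled $(i,j)$ to the neighbour labelled $(i^{y},j^{y})$ (possibly up to the precise convention for left/right action — a routine check). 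Hence the action of $A(T,\mathcal{I})$ on $\Gamma_1(\alpha(1,\rho_1))$ is essentially the action of $N_{S_q}(T)$ on ordered pairs of distinct points of $Q$. Since $T$ is $2$-transitive on $Q$ and $T\leq N_{S_q}(T)$, this action is transitive on ordered pairs of distinct points, which is exactly transitivity on $\Gamma_1(\alpha(1,\rho_1))$.

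Assembling the pieces: given two arbitrary neighbours $\nu,\nu'$ of $\alpha(1,\mathcal{I})$, apply $\sigma$ if necessary so that both have their altered coordinate in the first block; then apply a suitable $a(y,\mathcal{I})$ with $y\in T\leq N_{S_q}(T)$, chosen so that $y$ carries the relevant ordered pair of distinct points to the other, noting that $a(y,\mathcal{I})$ fixes $\alpha(1,\mathcal{I})$ and acts correctly on the first block while doing something harmless (an automorphism fixing $\alpha(1,\rho_2)$) on the second. This shows $H$ is transitive on $\Gamma^2_1(\alpha(1,\mathcal{I}))$, proving the lemma. The main obstacle I anticipate is purely bookkeeping: getting the composition of the $B$-part and $L$-part of $a(y,\rho)$ to act on the labelled neighbours $(i,j)$ with the right handedness, so that transitivity of $T$ on ordered pairs (as opposed to unordered pairs, or pairs with a twist) is what is actually needed — but since $2$-transitivity gives transitivity on \emph{ordered} pairs of distinct points, any of these variants is still covered, so the argument is robust to the precise convention.
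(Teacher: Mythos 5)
Your proposal is correct and follows essentially the same route as the paper: identify the neighbours in each block with ordered pairs $(i,j)$ of distinct points, show that $a(y,\mathcal{I})$ acts on them via the $2$-transitive action (the paper cites \cite[Lemma 1]{diagnt} for the ``routine check'' you defer), and use that $1\rho_2=1$ so $\sigma$ fixes $\alpha(1,\mathcal{I})$ and swaps the two block-orbits. The only cosmetic difference is that the paper establishes transitivity of $A(T,\rho)$ on $\Gamma_1(\alpha(1,\rho))$ for each $\rho$ and then lets $\sigma$ interchange the two $A(T,\mathcal{I})$-orbits, whereas you apply $\sigma$ first; the content is the same.
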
  

\begin{proof} We first describe the 
neighbours of the codeword $\alpha(1,\rho)$ for $\rho\in\Aut(T)$
in $\Gamma=H(q,q)$.  Following the notation of \cite{diagnt}, for $1\leq i,j,k\leq q$ we let 
\[\nu(\alpha(1,\rho),i,j)|_k=\left\{\begin{array}{ll}
  k&\textnormal{if $k\neq i,$}\\ j &\textnormal{if 
   $k=i$,} \end{array}\right.\]
so $\Gamma_1(\alpha(1,\rho))=\{\nu(\alpha(1,\rho),i,j)\,:\,i\neq j\}$.  It follows from 
\cite[Lemma 1]{diagnt} and \eqref{codeact} that $$\nu(\alpha(1,\rho),i,j)^{a(y,\rho)}=\nu(\alpha(1,\rho)^{a(y,\rho)},i^{y\rho},j^{y\rho})
=\nu(\alpha(1,\rho),i^{y\rho},j^{y\rho}).$$
Thus, because $N_{S_q}(T)$ is acting $2$-transitively, we deduce that $A(T,\rho)=\{a(y,\rho)\,:\,y\in N_{S_q}(T)\}$ 
acts transitively on $\Gamma_1(\alpha(1,\rho))$ in $\Gamma$.  Hence 
$A(T,\mathcal{I})$ has two orbits on $\Gamma^2_1(\alpha(1,\mathcal{I}))$ in $\Gamma^2$, which are
$$\mathcal{O}_1=\{(\nu,\alpha(1,\rho_2))\,:\,\nu\in\Gamma_1(\alpha(1,\rho_1))\}\textnormal{ and }
\mathcal{O}_2=\{(\alpha(1,\rho_1),\nu)\,:\,\nu\in\Gamma_1(\alpha(1,\rho_2))\}.$$
Because $1\rho_2=1$, it follows that $\alpha(1,\rho_2)=\alpha(1\rho_2,\rho_1)=\alpha(1,\rho_1)$, and so 
$\Gamma_1(\alpha(1,\rho_1))=\Gamma_1(\alpha(1,\rho_2))$.  Thus $\sigma$ interchanges $\mathcal{O}_1$ and $\mathcal{O}_2$. 
\end{proof}

We prove Theorem \ref{ntthm} by considering the group $G=\langle\Diag(T,\mathcal{I}), A(T,\mathcal{I}),\sigma\rangle$.
It follows from Lemma~\ref{distlowerbound} that $\Diag(T,\mathcal{I})$ acts regularly on $C(T,\mathcal{I})$.  Adding to this
the result of Lemma~\ref{ntlem}, we deduce that $G$ is a group of automorphisms of $C(T,\mathcal{I})$ that
acts transitively on $C(T,\mathcal{I})$.  
It also follows from Lemma~\ref{ntlem} that the stabiliser of $\alpha(1,\mathcal{I})$ in $G$ is equal to $H$.  
By allowing $G$ to translate any two neighbours of $C(T,\mathcal{I})$
to two neighbours of $\alpha(1,\mathcal{I})$, and then allowing an element of $H$ to map one of these neighbours to the other, we
deduce that $G$ acts transitively on the set of neighbours of $C(T,\mathcal{I})$, which proves Theorem \ref{ntthm}.  

\section{The affine special linear group $\ASL(2,r)$}\label{secaff}

Let $V=\mathbb{F}_r^2$ be the $2$-dimensional 
vector space of row vectors over the finite field $\mathbb{F}_r$ of size $r=2^f$ for some positive integer $f\geq 2$.  The group 
$\ASL(2,r)$ is equal to the split extension of $N$, the translations of $V$, by $\SL(2,r)$, the group of invertible
$2\times 2$ matrices over $\F_r$ with determinant $1$, and $\ASL(2,r)$ has a natural $2$-transitive
action on $V$.  It is known that there are $r$ conjugacy classes of complements of $N$ in $\ASL(2,r)$ \cite{boh}.
By embedding $\ASL(2,r)$ into $\SL(3,r)$, in this section we construct a representative for each of these conjugacy classes.  
Then, by considering the coset action on each representative, we give an explicit construction of the $r$ 
inequivalent $2$-transitive actions of $\ASL(2,r)$ of degree $r^2$.  We let $\mathcal{I}=(\rho_1,\ldots,\rho_{r})$,
where the $\rho_i$ are the representations for these inequivalent actions, and we prove the following.

\begin{theorem}\label{affmain} Let $\mathcal{I}$ be as above.  Then $C(\ASL(2,r),\mathcal{I})$ has the same inner distance 
distribution, and therefore also the same minimum distance, as $\Rep_r(C(\ASL(2,r),\rho_i))$ for $i=1,\ldots,r$.
\end{theorem}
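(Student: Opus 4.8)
The plan is to verify the hypothesis of Lemma~\ref{diffreps}: it suffices to show that $|\supp(t\rho_i)|=|\supp(t\rho_j)|$ for every $t\in\ASL(2,r)$ and all $i,j\in\{1,\dots,r\}$, for then Lemma~\ref{diffreps} gives at once the equality of inner distance distributions, and hence of minimum distances, asserted in the theorem. Write $G=\ASL(2,r)=N\rtimes S$, with $N\cong\F_r^2$ the normal elementary abelian translation subgroup and $S\cong\SL(2,r)$ a fixed complement; by construction the point stabiliser $H_i$ of $\rho_i$ is one of the $r$ complements of $N$ in $G$ (there being exactly $r$ conjugacy classes of such, by \cite{boh}), so that $H_i\cong S$ and in particular $|H_1|=\dots=|H_r|=|S|$. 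Since each $\rho_i$ has degree $r^2$, comparing the $|\supp(t\rho_i)|$ is the same as comparing the fixed-point counts $|\fix(t\rho_i)|$; and the standard count of fixed points in a coset action gives $|\fix(t\rho_i)|=|t^G\cap H_i|\cdot|\cent{G}{t}|/|H_i|$, so it suffices to prove that $|t^G\cap H_i|=|t^G\cap S|$ for every $i$ and every $t\in G$.

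Each $H_i$ may be written $H_i=\{(\phi_i(A),A):A\in S\}$ for a unique map $\phi_i\colon S\to N$, and the subgroup condition forces $\phi_i$ to be a $1$-cocycle, $\phi_i(AB)=\phi_i(A)B+\phi_i(B)$. The key point is the claim that
\[
\phi_i(A)\in\mathrm{Im}(A-I):=\{\,w(A-I)\;:\;w\in N\,\}\qquad\text{for every }A\in S.
\]
Granting it, a direct computation of conjugacy in the semidirect product $G=N\rtimes S$ shows that $(v,A)$ is $G$-conjugate to $(0,A)\in S$ whenever $v\in\mathrm{Im}(A-I)$. Taking $v=\phi_i(A')$ for each $A'\in S$, it follows that the bijection $H_i\to S$ defined by $(\phi_i(A'),A')\mapsto(0,A')$ sends every element of $H_i$ to a $G$-conjugate of itself; hence, for each $t\in G$, it restricts to a bijection from $t^G\cap H_i$ onto $t^G\cap S$, giving $|t^G\cap H_i|=|t^G\cap S|$. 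With the reduction of the first paragraph, this proves the theorem.

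It remains to establish the claim, and here the special structure of $\SL(2,2^f)$ is decisive: every element $A$ of $S\cong\SL(2,r)$ has order $1$, $2$, or odd, since the Sylow $2$-subgroups are elementary abelian (so nontrivial unipotent elements have order $2$) while the semisimple elements have order dividing $r-1$ or $r+1$, both odd. A routine induction on the cocycle identity gives $\phi_i(A^k)=\phi_i(A)\,(I+A+\dots+A^{k-1})$, so taking $k=\mathrm{ord}(A)$ yields $\phi_i(A)\,(I+A+\dots+A^{\mathrm{ord}(A)-1})=0$. If $\mathrm{ord}(A)$ is odd, then over $\F_r$ the operator $\mathrm{ord}(A)^{-1}(I+A+\dots+A^{\mathrm{ord}(A)-1})$ is the projection onto the fixed space of $A$, on whose complement $A-I$ acts invertibly; the displayed relation places $\phi_i(A)$ in that complement, so $\phi_i(A)\in\mathrm{Im}(A-I)$. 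If $\mathrm{ord}(A)=2$, then $A$ is a transvection, so $(A-I)^2=0$ and $\mathrm{rank}(A-I)=1$, whence $\ker(A-I)=\mathrm{Im}(A-I)$; and $\phi_i(A)(A-I)=\phi_i(A^2)=0$ then gives $\phi_i(A)\in\mathrm{Im}(A-I)$. The case $A=I$ is trivial, since $\phi_i(I)=0$.

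I expect the main obstacle to be exactly the claim $\phi_i(A)\in\mathrm{Im}(A-I)$ --- equivalently, that the class of $\phi_i$ in $H^1(S,N)$ restricts trivially to every cyclic subgroup of $S$; everything else is bookkeeping with cocycles and the conjugacy formula in $N\rtimes S$. This vanishing is a genuine feature of the natural module for $\SL(2,q)$ with $q$ even --- it fails both for $q$ odd and in higher dimension --- which is why the conclusion is special to the family $\ASL(2,2^f)$.
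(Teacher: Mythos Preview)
Your argument is correct and genuinely different from the paper's. Both proofs reduce to verifying the hypothesis of Lemma~\ref{diffreps}, namely that $|\supp(t\rho_i)|$ is independent of~$i$, but the verifications diverge. The paper argues directly with fixed points: it shows (Lemmas~\ref{atleastr}--\ref{lem2}) that any non-trivial element fixing at least two points must be an involution fixing exactly $r$ points in \emph{every} representation, then handles odd-order and even-order elements separately by explicit calculations with the matrices $X,Y_w,Z_w$ and the concrete description of centralisers in $\SL(2,r)$. Your route is cohomological: you encode each complement $H_i$ by its $1$-cocycle $\phi_i$, prove the pointwise condition $\phi_i(A)\in\mathrm{Im}(A-I)$ by exploiting that every element of $\SL(2,2^f)$ is either unipotent of order~$2$ or semisimple of odd order, and deduce that the natural bijection $H_i\to S$ preserves $G$-conjugacy classes, whence $|t^G\cap H_i|$ is independent of~$i$ and the fixed-point formula finishes. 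Your approach is cleaner and more conceptual---it never touches the explicit matrices of Section~\ref{secaff} and it isolates exactly the structural fact (restriction of $H^1(S,N)$ to every cyclic subgroup vanishes) that makes $\ASL(2,2^f)$ special. The paper's approach, by contrast, is more self-contained for a reader without cohomological background and ties in naturally with the explicit construction of the complements already developed there.
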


\begin{remark} The code in Theorem \ref{affmain} shows us that given any even prime power $r$, we can construct
a twisted permutation code with $r$ pairwise distinct representations such that the
code has a minimum distance equal to the lower bound of Proposition \ref{distlowerbound} (iii).
\end{remark}

To embed $\ASL(2,r)$ into $\SL(3,r)$ we begin by taking an element $a$ 
of order $r-1$ in $\mathbb{F}_r^{*}$ and letting $b=(1+a^2)^{-1}$. As $f\geq 2$, we see that $b$ is well-defined.
Consider the matrices 
\begin{equation}\label{gpelts}
x=\left(
\begin{array}{cc}
a&0\\
0&a^{-1}\\
\end{array}
\right),\,\,
y=\left(
\begin{array}{cc}
1&1\\
0&1\\
\end{array}
\right),\,\,
z=\left(
\begin{array}{cc}
b&b^2+b+1\\
1&b+1\\
\end{array}
\right).\end{equation}  For every $w\in \mathbb{F}_r$ we define 
\begin{equation*}
u=wa+wb+w\quad \textrm{and}\quad v=w+wa,
\end{equation*} 
\[
X=\left(
\begin{array}{ccc}
1&0&0\\
0&a&0\\
0&0&a^{-1}\\
\end{array}
\right),\,\,
Y_w=\left(
\begin{array}{ccc}
1&0&v\\
0&1&1\\
0&0&1\\
\end{array}
\right),\,\,
Z_w=\left(
\begin{array}{ccc}
1&w&u\\
0&b&b^2+b+1\\
0&1&b+1\\
\end{array}
\right)
\] and $$S_w=\langle X,Y_w,Z_w\rangle.$$ 

\begin{lemma}\label{apeman1}We have  $\SL(2,r)=\langle x,y,z\rangle$ and $\SL(2,r)\cong S_w$.
\end{lemma}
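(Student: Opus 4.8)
The plan is to verify the two assertions of Lemma~\ref{apeman1} separately, the first purely inside $\SL(2,r)$ and the second by exhibiting the isomorphism $S_w\cong\SL(2,r)$ explicitly via the conjugation action of $\SL(3,r)$ on a suitable subspace or quotient. First I would prove $\SL(2,r)=\langle x,y,z\rangle$. Since $\SL(2,r)$ is generated by the transvections, and in characteristic $2$ it is generated already by a single conjugacy class of transvections together with a diagonal torus element, it suffices to show that the subgroup $\langle x,y,z\rangle$ contains enough transvections. Concretely, $y$ is an upper unitriangular transvection and $x$ is a generator of the diagonal torus of order $r-1$; conjugating $y$ by powers of $x$ produces all transvections $\left(\begin{smallmatrix}1&a^{2k}\\0&1\end{smallmatrix}\right)$, and since $a$ has order $r-1$ the set $\{a^{2k}\}$ spans $\F_r$ over $\F_2$ (here $r=2^f$, so squaring is an automorphism of $\F_r$ and $\{a^{2k}:k\}=\F_r^*$). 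Hence $\langle x,y\rangle$ already contains the full upper unitriangular subgroup and the torus, i.e.\ a Borel subgroup $B$. Then the element $z$, which is not upper triangular (its lower-left entry is $1\neq 0$), lies outside $B$, so $\langle x,y,z\rangle$ properly contains $B$; as $B$ is a maximal subgroup of $\SL(2,r)$ for $r\geq 4$, we conclude $\langle x,y,z\rangle=\SL(2,r)$. One should double-check the case analysis for small $r$, but $f\geq 2$ guarantees $r\geq 4$, so $B$ is maximal and the argument goes through.

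For the isomorphism $\SL(2,r)\cong S_w$, the idea is that $X,Y_w,Z_w$ are the images of $x,y,z$ under a group embedding $\SL(2,r)\hookrightarrow\SL(3,r)$ of the shape $M\mapsto\left(\begin{smallmatrix}1&*\\0&M\end{smallmatrix}\right)$ twisted by a $1$-cocycle determined by the parameter $w$. I would set up the map $\phi_w:\SL(2,r)\to\SL(3,r)$ sending $x\mapsto X$, $y\mapsto Y_w$, $z\mapsto Z_w$ and verify it is a homomorphism by checking that the defining relations of $\SL(2,r)$ on the generators $x,y,z$ are satisfied by $X,Y_w,Z_w$ — or, more cleanly, observe that every matrix $\left(\begin{smallmatrix}1&\mathbf{c}\\0&M\end{smallmatrix}\right)$ with $M\in\SL(2,r)$ and $\mathbf{c}\in\F_r^2$ multiplies as $\left(\begin{smallmatrix}1&\mathbf{c}_1+\mathbf{c}_2M_1^{-1}\\0&M_1M_2\end{smallmatrix}\right)$-type (an affine action), so the subset $\{\left(\begin{smallmatrix}1&\mathbf{c}\\0&M\end{smallmatrix}\right)\}$ is a subgroup isomorphic to $N\rtimes\SL(2,r)=\ASL(2,r)$, and a complement to $N$ corresponds to a $1$-cocycle. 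The three generators $X,Y_w,Z_w$ all have this block shape with $M$ equal to $x,y,z$ respectively and top-row-tail $\mathbf{c}$ given by the stated formulas in $u,v,w$; since $x,y,z$ generate $\SL(2,r)$ by the first part, the subgroup $S_w=\langle X,Y_w,Z_w\rangle$ projects onto $\SL(2,r)$, and it is a complement (i.e.\ the projection is injective) precisely when the cocycle is well-defined, which is the content of the particular choice of $u,v$ in terms of $w$. Thus $S_w\cong\SL(2,r)$.

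The main obstacle I expect is the bookkeeping in the second part: one must check that the assignment $x\mapsto X,\ y\mapsto Y_w,\ z\mapsto Z_w$ really does extend to a homomorphism, equivalently that the "tail vectors" $\mathbf{c}_X=0,\ \mathbf{c}_{Y_w}=(v,1),\ \mathbf{c}_{Z_w}=(u,0)$ (reading off the top rows of $X,Y_w,Z_w$) assemble into a consistent $1$-cocycle $\SL(2,r)\to\F_r^2$ under the natural right action of $\SL(2,r)$ on $\F_r^2$. This amounts to verifying a finite list of cocycle identities forced by the relations among $x,y,z$; the specific values $b=(1+a^2)^{-1}$, $u=w(a+b+1)$, $v=w(1+a)$ are exactly what makes these identities hold, and unwinding them is the only genuinely computational step. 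Once the cocycle condition is confirmed, injectivity of $S_w\to\SL(2,r)$ is automatic (a complement meets the translation subgroup $N$ trivially by construction), and combined with the surjectivity from the first part we obtain $S_w\cong\SL(2,r)$, completing the proof.
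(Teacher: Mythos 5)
Your first half is correct and is a genuinely different argument from the paper's: you show $\langle x,y\rangle$ contains a Borel subgroup (the torus together with all upper unitriangular transvections, using that $a^2$ still generates $\F_r^*$ because $r-1$ is odd), observe $z$ lies outside it, and invoke maximality of the Borel; the paper instead verifies that $x,y,z$ satisfy a defining presentation of $\SL(2,r)$ (from Coxeter--Moser) and then uses simplicity. Your route is more elementary for that half, but note it only yields $\langle x,y,z\rangle=\SL(2,r)$ and contributes nothing toward the second assertion, which is where essentially all of the work lies.

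For $S_w\cong\SL(2,r)$ there is a genuine gap. Your argument reduces to ``check that $x\mapsto X$, $y\mapsto Y_w$, $z\mapsto Z_w$ respects the defining relations of $\SL(2,r)$ on these generators'' (equivalently, that the top-row tails assemble into a $1$-cocycle), and you declare this to be routine bookkeeping guaranteed by the choices of $b$, $u$, $v$. But this verification is precisely the content of the paper's proof, and it is not routine: one needs a \emph{finite defining set} of relations for $\SL(2,r)$ on generators matching $x,y,z$, which you neither identify nor cite; without it, ``the relations among $x,y,z$'' is not a checkable list. The paper takes the Coxeter--Moser presentation and verifies, besides the order and dihedral-type relations $x^{r-1}=y^2=z^3=(xz)^2=(yz)^2=[x^i,y]^2=1$, the crucial relation $x^f=y^{c_0}xy^{c_1}x\cdots xy^{c_f}$, where the $c_i$ are the coefficients of the minimal polynomial of $a^2$ over $\F_2$, together with its analogue for $X,Y_w,Z_w$; this last relation is the non-obvious computation. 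Checking only the ``obvious'' relations would show merely that $S_w$ is a quotient of some group possibly larger than $\SL(2,r)$, so the homomorphism property would not be established and injectivity of the projection $S_w\to\SL(2,r)$ (equivalently $S_w\cap E=1$) would not follow; as written, your proposal therefore does not prove the second assertion unless you either name and verify such a presentation (the paper's route) or give an independent argument that $S_w\cap E=1$. Two minor slips: the top-row tails are $(0,v)$ for $Y_w$ and $(w,u)$ for $Z_w$, not $(v,1)$ and $(u,0)$; and ``a complement meets $N$ trivially by construction'' is circular as stated --- injectivity should come from the homomorphism property together with simplicity of $\SL(2,r)$ (as in the paper), or from the complement description only after the cocycle has actually been constructed on all of $\SL(2,r)$.
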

\begin{proof}
As the field element $a$ has order $r-1$, we have
\begin{equation}\label{eqx}
x^{r-1}=1\quad\textrm{and}\quad X^{r-1}=1.
\end{equation}Also, a direct computation shows that
\begin{equation}\label{eqy}
y^2=1\quad\textrm{and}\quad Y_w^2=1.
\end{equation}
The characteristic polynomial of $z$ is $(\Lambda-b)(\Lambda-(b+1))+b^2+b+1=\Lambda^2+\Lambda+1$ and hence $z^2+z+1=0$.  
From this it follows with an easy computation that
\begin{eqnarray}\label{eqz}
z^3=1\quad \textrm{and}\quad Z_w^3=1.
\end{eqnarray}
Using the definition of $a$, $b$, $X$, $Y_w$ and $Z_w$, we see with a rather long  (but simple and direct) computation that
\begin{equation}\label{eqmany}
(xz)^2=(yz)^2=[x^i,y]^2=1\,\,\textrm{and}\,\, (XZ_w)^2=(Y_wZ_w)^2=[X^i,Y_w]^2=1,
\end{equation}
for every $i\in \{0,\ldots,r-1\}$.

Since $r$ is even, we see that $a^2$ is also a generator of the multiplicative group of the field $\mathbb{F}_r$. 
So, let $p(\Lambda)=c_f\Lambda^f+c_{f-1}\Lambda^{f-1}+c_{f-2}\Lambda^{f-2}+\cdots +c_{1}\Lambda+c_0$ be the minimal polynomial of $a^2$ over the 
ground field $\mathbb{F}_2$, that is, $c_i\in \{0,1\}$ and $p(a^2)=0$. Now a computation shows that
\begin{eqnarray*}
y^{c_0}xy^{c_1}xy^{c_2}x\cdots xy^{c_f}&=&
\left(
\begin{array}{cc}
a^f&\zeta\\
0&a^{-f}
\end{array}
\right)
\end{eqnarray*}
with $$\zeta=c_fa^f+c_{f-1}a^{f-2}+c_{f-2}a^{f-4}+\cdots+c_1a^{-f+2}+c_0a^{-f}.$$
In particular, $a^f\zeta=p(a^2)=0$ and hence $\zeta=0$. This gives that
\begin{eqnarray}\label{hard}
x^f&=&y^{c_0}xy^{c_1}xy^{c_2}x\cdots xy^{c_f}.
\end{eqnarray}
Similarly, we have
\begin{eqnarray}\label{hard1}
Y_w^{c_0}XY_w^{c_1}XY_w^{c_2}X\cdots XY_w^{c_f}&=&
\left(
\begin{array}{ccc}
1&0&v\zeta\\
0&a^f&\zeta\\
0&0&a^{-f}
\end{array}
\right)=X^f.
\end{eqnarray}

As $x$, $y$ and $z$ have determinant $1$, we have $\langle x,y,z\rangle\leq \SL(2,r)$.  Also, we 
see from~\cite[Section~$7.6$, lines~9,~10]{GKK} that $\SL(2,r)$ has presentation with three generators $R$, $S$ and $U$ and with relators
\begin{eqnarray*}
&& R^{r-1}=S^2=U^3=(RU)^2=(SU)^2=1,\\
&&[R^i,S]^2=1\,\,\,\,\,\textrm{for every  }i\in \{1,\ldots,r-1\},\\
&&R^f=S^{c_0}RS^{c_1}RS^{c_2}R\cdots S^{c_{f-1}}RS^{c_f}.
\end{eqnarray*}
In particular, from~\eqref{eqx}--\eqref{hard1} we obtain 
that $\langle x,y,z\rangle$ and $S_w$  both satisfy the defining relations of $\SL(2,r)$ and hence are isomorphic to a 
quotient of $\SL(2,r)$. Since $f\geq 2$, the group $\SL(2,r)$ is simple and the lemma follows.
\end{proof}

For $\n\in V$ let $\varphi_{\n}$ denote the translation of $V$ by $\n$.  By letting
$\varphi_{\n}$, $x$, $y$ and $z$ act on an arbitrary vector of $V$, we determine the following relations.
\begin{equation}\label{neq1}
x^{-1}\varphi_{\n}x\varphi_{{\bf{\n}}x}=y^{-1}\varphi_{\n}y\varphi_{{\bf{v}}y}=z^{-1}\varphi_{\n}z\varphi_{{\bf{v}}z}=1\textnormal{ for all $\n\in V$.}
\end{equation}
If $\n=(v_1,v_2)\in V$ we let  
\[
e(\n)=\left(
\begin{array}{ccc}
1&v_1&v_2\\
0&1&0\\
0&0&1
\end{array}
\right)
\] and $$E=\{e(\n)\,:\,\n\in V\}.$$  
Direct calculation shows that, for $\n\in V$ and $w\in \F_r$, the following relations hold:
\begin{equation}\label{neq2}
X^{-1}e(\n)Xe(\n x)=Y_w^{-1}e(\n)Y_we(\n y)=Z^{-1}_we(\n)Z_we(\n z)=1.\end{equation}

It is a consequence of \eqref{neq2} that $S_w$ normalises $E$ for each $w$.  Furthermore, by Lemma \ref{apeman1},
$S_w\cong\SL(2,r)$, which is simple as $f\geq 2$, and so $E\cap S_w=1$ for each $w$.  Thus we can define
$$G=ES_0,$$ the split extension of $E$ by $S_0$.  As $Y_w=e({\bf{v}}_1)Y_0$ and $Z_w=e({\bf{v}}_2)Z_0$ where  ${\bf{v}_1}=(0,v)$, 
${\bf{v}_2}=(w,u)\in V$, we conclude that $S_w$ is a subgroup of $G$ for each $w$.  
Hence $G$ is also equal to the split extension of $E$ by $S_w$ for each $w$.  Thus another consequence of Lemma \ref{apeman1} is
that $$\ASL(2,r)=\langle \varphi_{\n},x,y,z\,:\,\n\in V\rangle,\textnormal{ and } G=\langle e(\n), X, Y_w, Z_w\,:\,\n\in V\rangle\textnormal{ for each
$w\in\F_r$.}$$ For $w\in\F_r$ let $\tau_w:\ASL(2,r)\longrightarrow G$ be the group homomorphism that 
takes $$x\longmapsto X,\,\, y\longmapsto Y_w,\,\, z\longmapsto Z_w,\,\,\textnormal{ and }\,\,\varphi_{\n}\longmapsto e(\n)
\textnormal{ for each $\n\in V$.}$$  We observe, from \eqref{eqx}--\eqref{neq2}, that $\tau_w$ is well defined.
Furthermore, because $E$ and $S_w$ are both subgroups of $\tau_w(\ASL(2,r))$, we deduce the following.

\begin{lemma} The map $\tau_w$ is an isomorphism from $\ASL(2,r)$ to $G$ for each $w\in\F_r$.  
\end{lemma}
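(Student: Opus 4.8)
The plan is to read the lemma off from Lemma~\ref{apeman1} together with the two semidirect-product decompositions already in hand. We already know that $\tau_w$ is a well-defined group homomorphism — this was observed from \eqref{eqx}--\eqref{neq2}, the point being that the defining relations of $\ASL(2,r)$ on the generating set $\{\varphi_{\n}:\n\in V\}\cup\{x,y,z\}$ (the presentation of $\SL(2,r)$ on $x,y,z$ recorded in \eqref{eqx}--\eqref{hard1}, the abelian relations among the $\varphi_{\n}$, and the conjugation relations \eqref{neq1}) are all satisfied by the corresponding elements $e(\n)$, $X$, $Y_w$, $Z_w$ of $G$. The image of $\tau_w$ contains every $e(\n)=\tau_w(\varphi_{\n})$ and also $S_w=\langle X,Y_w,Z_w\rangle$, hence contains $\langle E,S_w\rangle=ES_w=G$; so $\tau_w$ is surjective. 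It therefore remains only to show $\tau_w$ is injective, and for this I would simply compare orders.

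Indeed, $\ASL(2,r)=N\rtimes\SL(2,r)$ with $N$ the translation group of $V=\F_r^2$, so $|\ASL(2,r)|=|N|\,|\SL(2,r)|=r^2\,|\SL(2,r)|$. On the other side $G=ES_w$ with $E\cap S_w=1$, hence $|G|=|E|\,|S_w|$; here $|E|=r^2$ because $\n\mapsto e(\n)$ is a bijection $V\to E$, and $|S_w|=|\SL(2,r)|$ by Lemma~\ref{apeman1}. Thus $|G|=r^2\,|\SL(2,r)|=|\ASL(2,r)|$, and a surjective homomorphism between finite groups of equal order has trivial kernel, so $\tau_w$ is an isomorphism.

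If one prefers to avoid cardinalities, the same conclusion follows from the normal subgroup structure: $\ker\tau_w\trianglelefteq\ASL(2,r)$ meets $N$ trivially (since $\varphi_{\n}\mapsto e(\n)$ is injective) and meets $\SL(2,r)=\langle x,y,z\rangle$ trivially (since $f\geq2$ makes $\SL(2,r)$ simple and $\tau_w$ is nontrivial on it, hence injective there, by Lemma~\ref{apeman1}); as $\ASL(2,r)/N\cong\SL(2,r)$ is simple, either $\ker\tau_w\leq N$, forcing $\ker\tau_w=1$, or $\ker\tau_w N=\ASL(2,r)$, which would make $G\cong N$ abelian, contradicting $S_w\leq G$. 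In either route there is essentially no obstacle: well-definedness and surjectivity of $\tau_w$, the decomposition $G=ES_w$ with $E\cap S_w=1$, and $S_w\cong\SL(2,r)$ have all been proved, so the only remaining content is the elementary order count above. The one point worth flagging is that $f\geq2$ is used, exactly as it is in Lemma~\ref{apeman1}.
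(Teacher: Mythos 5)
Your proposal is correct and matches the paper's (very brief) justification: the paper likewise observes that $\tau_w$ is well defined via the relations \eqref{eqx}--\eqref{neq2} and deduces the lemma from the fact that $E$ and $S_w$ both lie in $\tau_w(\ASL(2,r))$, so that $\tau_w$ is onto $G=ES_w$; the injectivity step you spell out (order count using $|E|=r^2$, $E\cap S_w=1$ and $S_w\cong\SL(2,r)$ from Lemma~\ref{apeman1}, or alternatively the kernel argument) is exactly the implicit remaining content. Nothing further is needed.
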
  

Note that since $H^1(\SL(2,r),N)\cong \mathbb{F}_r$ by~\cite[Table~$4.3$, type~$A_1$]{boh}, 
we have that $\ASL(2,r)$ contains exactly $r$ conjugacy classes of complements of $N$ in $\ASL(2,r)$.  We now
show that in this embedding of $\ASL(2,r)$ in $\SL(3,r)$, each conjugacy class of complements of $N$ 
contains a unique group $S_w$ for some $w\in \F_r$.     

\begin{lemma}\label{apeman2}For each $w$ and $w'$ in $\mathbb{F}_r$ with $w\neq w'$, the groups $S_w$ and $S_{w'}$ are not conjugate in $G$.
\end{lemma}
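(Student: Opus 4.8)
The plan is to show that if $S_w$ and $S_{w'}$ were conjugate in $G=ES_0$, then $w=w'$, by exploiting the split extension structure $G = E \rtimes S_0$ together with the cohomological count $H^1(\SL(2,r),N)\cong\F_r$. First I would recall the standard bijection: since $E\cong N$ as $S_0$-modules (via $\tau_0$) and $E\cap S_w=1$ with $ES_w=G$ for every $w\in\F_r$ (both established above from Lemma~\ref{apeman1}), each $S_w$ is a complement to $E$ in $G$. Two complements $S_w$, $S_{w'}$ are conjugate in $G$ if and only if the corresponding derivations (1-cocycles) $S_0\to E$ differ by a 1-coboundary, i.e.\ represent the same class in $H^1(S_0,E)\cong H^1(\SL(2,r),N)\cong\F_r$. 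So it suffices to compute the cocycle attached to $S_w$ and check that distinct $w$ give distinct cohomology classes.

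The concrete route: fix the complement $S_0$ as the reference complement splitting $G=E\rtimes S_0$, and write each generator of $S_w$ in the form $e(\n_g)\cdot g_0$ with $g_0\in S_0$. From the relations $Y_w = e({\bf v}_1)Y_0$ and $Z_w = e({\bf v}_2)Z_0$ with ${\bf v}_1=(0,v)$, ${\bf v}_2=(w,u)$ and $X\in S_0$ (already noted in the text, with $v = w+wa$, $u = wa+wb+w$), one reads off the derivation $d_w\colon S_0\to E$ determined on generators by $X\mapsto 0$, $Y_0\mapsto e(0,w+wa)$, $Z_0\mapsto e(w,wa+wb+w)$. The key step is then to verify that the map $w\mapsto [d_w]\in H^1(S_0,E)$ is \emph{injective}; since both sides have size $r$ (the left by inspection, the right by Bohr's table), injectivity is equivalent to the statement that $d_w$ is a coboundary only when $w=0$. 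A coboundary has the form $s\mapsto e(\n)^{-1}\cdot e(\n)^s = e(\n - \n s)$ for a fixed $\n\in V$; so I must show that if there exists $\n=(n_1,n_2)\in V$ with $\n - \n X = 0$, $\n - \n Y_0 = (0,w+wa)$ and $\n - \n Z_0 = (w, wa+wb+w)$, then $w=0$. The first equation (using that $X$ acts on $V$ as $\mathrm{diag}(a,a^{-1})$ and $r$ even, so $1-a$ and $1-a^{-1}$ are nonzero) forces $\n=0$, whence the second equation gives $w+wa=0$, i.e.\ $w(1+a)=0$; since $a\neq 1$ and the characteristic is $2$ we get $a+1\neq 0$, hence $w=0$, as required. (Alternatively one bypasses the explicit $H^1$ computation and argues directly: a conjugating element of $G$ can be written $e(\n)s_0$; conjugating $S_w$ by $s_0\in S_0$ preserves the reference splitting and only changes the derivation by the automorphism $s_0$ acts by, and one then reduces to $e(\n)$ alone, leading to the same linear system $\n - \n g = \n_g^{(w')} - \n_g^{(w)}$ on generators.)

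The main obstacle I anticipate is purely bookkeeping: making sure the derivation attached to $S_w$ is computed with the correct normalisation (left versus right cocycle convention, and which complement is taken as the zero of $H^1$), since a sign or transpose error there would not change the final conclusion but would muddy the argument. The mathematically substantive point — that the three displayed ``commuting'' relations \eqref{neq2} really do encode a well-defined cocycle and that the generator set $\{X,Y_0,Z_0\}$ together with the $\SL(2,r)$-presentation of Lemma~\ref{apeman1} suffices to pin down the class — is routine once one invokes the presentation, because a derivation is determined by its values on generators subject to the relator conditions, and \eqref{eqx}--\eqref{neq2} were exactly engineered to make those conditions hold. Everything else reduces to the one–line field computation $w(1+a)=0\Rightarrow w=0$ in characteristic $2$ with $a\neq 1$.
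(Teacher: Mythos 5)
Your argument is sound and reaches the right conclusion, but it is packaged quite differently from the paper's proof. The paper works entirely inside $\SL(3,r)$ by elementary matrix computation: since $G=ES_w=ES_{w'}$ and $E\cap S_w=E\cap S_{w'}=1$, any conjugation carrying $S_w$ to $S_{w'}$ may be replaced by conjugation by some $e(\mathbf{v})\in E$; then $X^{-1}X^{e(\mathbf{v})}\in E\cap S_{w'}=1$ forces $\mathbf{v}=0$ (using $a\neq 1$), so $S_w=S_{w'}$, and finally $Y_wY_{w'}\in E\cap S_w=1$ forces $(w+w')(1+a)=0$, hence $w=w'$. Your route through the complement--derivation correspondence and $H^1(S_0,E)$ reduces to exactly the same two linear conditions ($\mathbf{v}-\mathbf{v}x=0$ forcing $\mathbf{v}=0$, then $w(1+a)=0$), so the cohomological scaffolding buys conceptual context (and explains the count of $r$ classes, which the paper quotes from Cline--Parshall--Scott anyway) but is not needed for the lemma itself. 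Three points to tighten. First, ``both sides have size $r$'' does not by itself make injectivity of $w\mapsto[d_w]$ equivalent to triviality of its kernel; you need that the map is additive in $w$ (true, since derivations add pointwise when $E$ is abelian, are determined by their values on generators, and your generator values are linear in $w$), or else simply run your ``alternative'' direct argument on the difference, which handles general $w\neq w'$ without any appeal to sizes. Second, the reduction to conjugation by an element of $E$ alone is cleanest by writing the conjugating element as $se$ with $s\in S_w$ and $e\in E$, so that $S_w^{se}=S_w^{e}$ (this is in effect what the paper does); your decomposition $e(\mathbf{v})s_0$ requires the extra, though standard, fact that conjugation by $S_0$ acts trivially on $H^1(S_0,E)$. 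Third, a bookkeeping caution of the kind you yourself flag: with the row-vector conventions in force one finds $Z_w=Z_0e(\mathbf{v}_2)$ rather than $e(\mathbf{v}_2)Z_0$, so the cocycle value at $Z_0$ is $\mathbf{v}_2z^{-1}$ rather than $\mathbf{v}_2$; this does not affect your conclusion, since only the $X$- and $Y_0$-equations are used in the final step, but it should be stated consistently if you write the argument out in full.
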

\begin{proof}
Let $w$ and $w'$ be in $\mathbb{F}_r$ and suppose that $S_w$ and $S_{w'}$ are conjugate in $G$.
As $G=ES_w=ES_{w'}$ and $E\cap S_w=E\cap S_{w'}=1$, it follows that 
$S_w$ and $S_{w'}$ are conjugate via an element $e(\n)\in E$ for some $\n=(v_1,v_2)\in V$. 
Furthermore, as $X\in S_w\cap S_{w'}$, we have $X^{-1},X^{e(\n)}\in S_{w'}$ and hence $X^{-1}X^{e(\n)}\in S_{w'}$. Now
\begin{eqnarray*}
X^{-1}X^{e(\n)}&=&
\left(
\begin{array}{ccc}
1&0&0\\
0&a^{-1}&0\\
0&0&a\\
\end{array}\right)
\left(
\begin{array}{ccc}
1&v_1&v_2\\
0&1&0\\
0&0&1\\
\end{array}\right)
\left(
\begin{array}{ccc}
1&0&0\\
0&a&0\\
0&0&a^{-1}\\
\end{array}\right)
\left(
\begin{array}{ccc}
1&v_1&v_2\\
0&1&0\\
0&0&1\\
\end{array}\right)\\
&=&
\left(
\begin{array}{ccc}
1&v_1+v_1a&v_2+v_2a\\
0&1&0\\
0&0&1\\
\end{array}\right).
\end{eqnarray*}
Since $E\cap S_{w'}=1$ we must have $v_1+v_1a=0$ and $v_2+v_2a=0$, that is, $v_1a=v_1$ and $v_2a=v_2$. As $f\geq 2$, 
we have $a\neq 1$ and hence $v_1=v_2=0$. This gives $e(\n)=1$ and hence $S_w=S_{w'}$.

From the previous paragraph we have $Y_w,Y_{w'}\in S_{w}$ and hence $Y_{w}Y_{w'}\in S_w$. Now,
\begin{eqnarray*}
Y_wY_{w'}&=&\left(
\begin{array}{ccc}
1&0&w+wa\\
0&1&1\\
0&0&1\\
\end{array}\right)\left(
\begin{array}{ccc}
1&0&w'+w'a\\
0&1&1\\
0&0&1\\
\end{array}\right)\\
&=&\left(
\begin{array}{ccc}
1&0&(w+w')+(w+w')a\\
0&1&0\\
0&0&1\\
\end{array}\right).
\end{eqnarray*}
Since $E\cap S_w=1$, we must have $(w+w')a=w+w'$ and hence $w+w'=0$. This gives $w=w'$ and the lemma follows.
\end{proof}

Let $\Omega_w$ be the set of right cosets of $S_w$ in $G$.  As $S_w$ normalises $E$, and because $G=ES_w$, it
follows that every right coset of $S_w$ in $\Omega_w$ has a coset representative in $E$. Moreover, because $E\cap S_w=1$, 
each $e(\n)\in E$ uniquely determines the right coset it belongs to.  Hence the map 
$\iota_w: V\longrightarrow \Omega_w$ given by $\n \longmapsto S_we(\n)$ is a well defined bijection.  

\begin{proposition}\label{permiso} Under $(\iota_w,\tau_w)$, the action of $G$ on $\Omega_w$ is permutationally
isomorphic to the action of $\ASL(2,r)$ on $V$.
\end{proposition}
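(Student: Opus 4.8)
The plan is to produce the bijection required by \eqref{acteq} from the standard identification of a transitive permutation group with its action on the coset space of a point stabiliser, applied on each side of the isomorphism $\tau_w$, and then to recognise the resulting bijection as $\iota_w$. Essentially all of the work has already been done in the earlier lemmas: Lemma~\ref{apeman1} identifies $\SL(2,r)$ with $\langle x,y,z\rangle$, and the lemma immediately preceding this proposition states that $\tau_w\colon\ASL(2,r)\to G$ is an isomorphism.

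Concretely, I would first note that $\ASL(2,r)$ acts transitively on $V$ with the stabiliser of the zero vector $\z$ equal to $\SL(2,r)$, and that $\SL(2,r)=\langle x,y,z\rangle=:L$ by Lemma~\ref{apeman1}. Since $\ASL(2,r)=\langle\varphi_\n,x,y,z:\n\in V\rangle$ and the translations are already transitive on $V$, it follows that $\ASL(2,r)=L\cdot\{\varphi_\n:\n\in V\}$, the right cosets $L\varphi_\n$ ($\n\in V$) are pairwise distinct, and the map $L\varphi_\n\mapsto\z^{\varphi_\n}=\n$ is a permutational isomorphism (with the identity map on $\ASL(2,r)$) between the action of $\ASL(2,r)$ on the right cosets of $L$ by right multiplication and the natural action of $\ASL(2,r)$ on $V$. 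Next I would transport this along $\tau_w$: since $\tau_w(\varphi_\n)=e(\n)$ and $\tau_w$ maps $L=\langle x,y,z\rangle$ onto $\langle X,Y_w,Z_w\rangle=S_w$, the assignment $Lg\mapsto S_w\tau_w(g)$ is a permutational isomorphism (with group isomorphism $\tau_w$) from the action of $\ASL(2,r)$ on the right cosets of $L$ to the action of $G$ on the right cosets of $S_w$, that is, on $\Omega_w$. Composing the two permutational isomorphisms, the natural action of $\ASL(2,r)$ on $V$ is permutationally isomorphic to the action of $G$ on $\Omega_w$; and following $\n\in V$ through the composite gives $\n\mapsto L\varphi_\n\mapsto S_w\tau_w(\varphi_\n)=S_we(\n)=\iota_w(\n)$, so the bijection is $\iota_w$ and the accompanying group isomorphism is the one induced by $\tau_w$ --- precisely what the proposition asserts. (Faithfulness of the natural action of $\ASL(2,r)$ on $V$ ensures that $\tau_w$ does descend to an isomorphism of the two permutation groups, so that \eqref{acteq} makes sense.)

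An equivalent, more computational route is to verify \eqref{acteq} by hand using the relations \eqref{neq1} and \eqref{neq2} that were introduced for this very purpose. The condition is $\iota_w(\n^g)=\iota_w(\n)^{\tau_w(g)}$, i.e.\ $e(\n)\,\tau_w(g)\,e(\n^g)^{-1}\in S_w$ for all $\n\in V$ and all $g\in\ASL(2,r)$; as $\tau_w$ is a homomorphism it is enough to check this for $g$ in the generating set $\{\varphi_\n:\n\in V\}\cup\{x,y,z\}$. For $g=\varphi_{\mathbf{u}}$ it is immediate from $e(\n)e(\mathbf{u})=e(\n+\mathbf{u})$. For $g\in\{x,y,z\}$ it follows from \eqref{neq2}: since $E\cong(V,+)$ has exponent $2$ and the natural action of such $g$ on $V$ is the matrix product $\n\mapsto\n g$ (as encoded in \eqref{neq1}), relation \eqref{neq2} becomes $\tau_w(g)^{-1}e(\n)\tau_w(g)=e(\n^g)$, whence $e(\n)\,\tau_w(g)\,e(\n^g)^{-1}=\tau_w(g)\in S_w$. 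I do not anticipate a real obstacle; the two points that call for attention are (i) using Lemma~\ref{apeman1} to be sure that $\langle x,y,z\rangle$ is \emph{all} of $\SL(2,r)$, hence the full stabiliser of $\z$ --- and correspondingly that $S_w=\langle X,Y_w,Z_w\rangle$ is the full stabiliser of the trivial coset of $\Omega_w$ --- and (ii) keeping the left/right coset conventions consistent with the side on which $\ASL(2,r)$ is taken to act on $V$; the remaining content is routine and is already packaged in \eqref{neq1} and \eqref{neq2}.
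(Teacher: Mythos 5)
Your proposal is correct and, in its second (computational) formulation, coincides with the paper's own proof, which verifies $\iota_w(\n y)=\iota_w(\n)^{\tau_w(y)}$ via $S_we(\n)Y_w=S_wY_we(\n y)=S_we(\n y)$ using the relations \eqref{neq1}--\eqref{neq2} (together with the fact that $E$ has exponent $2$) and then treats the remaining generators in the same way. Your first, coset-transport route is just a conceptual repackaging of that same verification, so nothing further is needed.
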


\begin{proof}  Let $\n\in V$ and consider the generator $y$ of $\ASL(2,r)$.  From the definition of $\iota_w$
it follows that $\iota_w({\n}y)=S_we({\bf{v}}y)$.  Now, it follows from \eqref{neq1} that 
$$\iota_w(\n)\tau_w(y)=S_we(\n)Y_w=S_wY_we({\n}y)=S_we({\n}y).$$  Thus
$\iota_w({\n}y)=\iota_w(\n)\tau_w(y)$.  By applying similar arguments to the other generators of $\ASL(2,r)$, 
we conclude that $(\iota_w,\tau_w)$ is a permutational isomorphism.  
\end{proof}

It is a consequence of Proposition \ref{permiso} that $G$ acts $2$-transitively on $\Omega_w$ for each $w\in\F_r$.  
Moreover, it follows from Lemma \ref{apeman2} and \cite[Thm. 1.3]{pcam} that for $w'\neq w$ the action of $G$ on $\Omega_{w'}$ is inequivalent
to the action of $G$ on $\Omega_w$. Hence these actions of $G$ on $\Omega_w$, for $w$ in $\F_r$, are $r$ pairwise inequivalent
$2$-transitive actions of degree $r^2$. Thus, by considering the action of $\ASL(2,r)$ on the right cosets of $\tau_0^{-1}(S_w)$ for each $w\in S_w$,
it follows that $\ASL(2,r)$ has $r$ inequivalent $2$-transitive actions of degree $r^2$.  To prove Theorem \ref{affmain},
we look at the number of fixed points of an element of $\ASL(2,r)$ in each of these actions.  

\begin{lemma}\label{atleastr} Let $1\neq t\in\ASL(2,r)$ be an element that has at least two fixed points 
in some $2$-transitive representation of degree $r^2$.  Then $t$ fixes $r$ points in every $2$-transitive representation.  Moreover $|t|=2$.  
\end{lemma}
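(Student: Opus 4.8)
The plan is to transport the problem to the natural action. By Proposition~\ref{permiso}, together with the isomorphisms $\tau_0,\tau_w$, each of the $r$ representations $\rho_w$ ($w\in\F_r$) is permutationally isomorphic to the natural $2$-transitive action of $L:=\ASL(2,r)$ on $V:=\F_r^2$; let $\psi_w\in\Aut(L)$ be the induced automorphism, so that $|\fix(\rho_w(s))|=|\fix(s^{\psi_w})|$ for every $s\in L$, the latter count taken in the action on $V$. Automorphisms preserve element orders, and the translation subgroup $N$ is the socle of $L$ (as $\SL(2,r)$ acts irreducibly on $V$ and $C_L(N)=N$), hence characteristic, so $s^{\psi_w}\in N\iff s\in N$. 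Therefore the whole lemma follows from the statement: \emph{for $1\neq s\in L$, the element $s$ has at least two fixed points on $V$ if and only if $|s|=2$ and $s\notin N$, and in that case $s$ has exactly $r$ fixed points on $V$.} Indeed, if $1\neq t$ has at least two fixed points in some $\rho_w$, then $t^{\psi_w}$ has at least two fixed points on $V$, whence $|t|=2$ and $t\notin N$; and then for every $w'$ the element $t^{\psi_{w'}}$ has order $2$ and lies outside $N$, so by the statement it has exactly $r$ fixed points on $V$, i.e.\ $|\fix(\rho_{w'}(t))|=r$. (These $\rho_{w'}$ exhaust the $2$-transitive representations of $L$ of degree $r^2$ up to permutational isomorphism.)

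To prove the displayed statement I would argue by elementary linear algebra. Write $s\in L$ as $v\mapsto vA+c$ with $A\in\SL(2,r)$ and $c\in V$, so that the fixed points of $s$ are the solutions of $v(A-I)=c$. If $A=I$ then $s$ is a nontrivial translation: no fixed points, $|s|=2$, and $s\in N$. Suppose $A\neq I$. The characteristic polynomial of $A$ is $\Lambda^2-(\operatorname{tr}A)\Lambda+1$, which has $1$ as a root precisely when $\operatorname{tr}A=0$; hence if $\operatorname{tr}A\neq0$ then $A-I$ is invertible and $s$ has a unique fixed point. The decisive case is $A\neq I$ with $\operatorname{tr}A=0$: since $r$ is even the characteristic polynomial equals $(\Lambda+1)^2$, so $B:=A-I$ is a nonzero nilpotent $2\times2$ matrix; thus $B^2=0$, $\operatorname{rank}B=1$ and $\operatorname{im}B=\ker B$ is a line, and $s^2$ is the translation by $cB$. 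Hence $|s|=2$ exactly when $c\in\ker B=\operatorname{im}B$, which is exactly when $v(A-I)=c$ is solvable, in which case it has $|\ker B|=r$ solutions; otherwise $|s|=4$ and $s$ has no fixed point. Assembling the four situations ($A=I$; $A\neq I$ with $\operatorname{tr}A\neq0$; $A\neq I$ with $\operatorname{tr}A=0$ and $c\in\operatorname{im}B$; $A\neq I$ with $\operatorname{tr}A=0$ and $c\notin\operatorname{im}B$) gives fixed-point counts $0,1,r,0$ respectively, and yields the displayed statement.

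The point where the phenomenon of the lemma really lives is the decisive case above: once $A$ is a nontrivial unipotent element, whether $s$ fixes $0$ or $r$ points is governed purely by whether $s^2=1$, that is, by $|s|$, and $|s|$ is an isomorphism invariant; together with the fact that membership in $N=\soc(L)$ is also an isomorphism invariant, this is what forces the fixed-point count to be independent of which of the $r$ inequivalent degree-$r^2$ actions is chosen. I expect the main (though still modest) obstacle to be keeping this $2\times2$ linear algebra over a field of even order honest --- in particular the interplay between solvability of $v(A-I)=c$, the structure of $A-I$, and the order of $s$ --- while the extraction of $\psi_w$ from Proposition~\ref{permiso} and the identification $N=\soc(L)$ are routine.
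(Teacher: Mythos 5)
Your proof is correct, but it reaches the conclusion by a genuinely different route from the paper's. Both arguments begin the same way, using Proposition~\ref{permiso} to transfer everything to the natural action on $V=\F_r^2$. From there the paper argues concretely: by $2$-transitivity it conjugates $t$ to a unipotent stabiliser element $M_{c'}$, reads off $|\fix(M_{c'})|=r$ and $|M_{c'}|=2$, and then obtains independence of the representation by explicit computation inside the embedding of Section~\ref{secaff} --- writing $c'=\ell^2$ (possible since $r$ is even), conjugating to the generator $y$ of \eqref{gpelts}, and checking that $\tau_0^{-1}\tau_w(y)$ is $y$ composed with a translation, which again fixes exactly $r$ points. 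You instead prove a complete fixed-point trichotomy ($0$, $1$ or $r$) for an arbitrary affine map $v\mapsto vA+c$ by characteristic-$2$ linear algebra, characterise ``at least two fixed points'' as ``$|s|=2$ and $s\notin N$'', and transfer across the $r$ inequivalent representations purely from automorphism-invariance of element orders and of the characteristic subgroup $N=\soc(\ASL(2,r))$, with no computation involving $X$, $Y_w$, $Z_w$ or the maps $\tau_w$. The paper's route is short because its explicit machinery is already set up; yours is more conceptual, independent of the particular model of the twisted actions, and yields Corollary~\ref{corfix} (and much of Lemmas~\ref{lem1} and~\ref{lem2}) as a by-product. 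One small step should be made explicit: the ``if'' direction of your displayed statement needs that an involution outside $N$ cannot fall into your second case $\operatorname{tr}A\neq 0$; this is immediate either from $A^2=(\operatorname{tr}A)A+I$ (Cayley--Hamilton in characteristic $2$), so that $A^2=I$ with $A\neq I$ forces $\operatorname{tr}A=0$, or from parity, since an involution fixing exactly one of the $r^2$ points would have to split the remaining odd number of points into $2$-cycles.
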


\begin{proof}  By Proposition \ref{permiso}, each $2$-transitive action of $\ASL(2,r)$ of degree $r^2$ is permutationally isomorphic
to its natural action on $V$.  Thus without loss of generality, we can assume
that $t$ fixes at least $2$ points in the natural action of $\ASL(2,r)$.  As $\ASL(2,r)$ acts
$2$-transitively on $V$, it follows that $t$ is conjugate to an element that fixes ${\bf{0}}=(0,0)$ and ${\bf{e_2}}=(0,1)$.  
It holds that \begin{equation}\label{slstab}\SL(2,r)_{{\bf{e_2}}}=\left\{M_c=\left(
\begin{array}{cc}
1&c\\
0&1\\
\end{array}
\right)\,:\,c\in\F_r\right\},\end{equation}
so $t$ is conjugate to $M_{c'}$ for some $c'\in\F^*_r$.  
It is straightforward to show that $\fix(M_{c'})=\{(0,c)\,:\,c\in \F_r\}$ and $|M_{c'}|=2$.  So
$t$ fixes $r$ points in this action and has order $2$. 
Now, as $r$ is an even prime power, $c'=\ell^2$ for some $\ell\in\F_r$.  
Conjugating $M_{c'}$ by the diagonal matrix $\diag(\ell,\ell^{-1})$ gives the group element 
$y$ in \eqref{gpelts}.  Moreover, for
$0\neq w$, it holds that $\tau^{-1}_0\tau_w(y)=\tau_0^{-1}(Y_w)=\varphi_{{\bf{v}}}y$, where ${\bf{v}}=(0,v)$ with $v=w+wa$, and 
$\fix(\varphi_{{\bf{v}}}y)=\{(v,c)\,:\,c\in\F_r\}$.  Hence $\tau_0^{-1}\tau_w(t)$ fixes $r$ points.  In particular, $t$ fixes $r$ points in every 
$2$-transitive representation.  
\end{proof}

\begin{corollary}\label{corfix} Let $t\in\ASL(2,r)$.  Then $t$ fixes $0$, $1$ or $r$ points in each $2$-transitive representation of $\ASL(2,r)$ of degree $r^2$.  
\end{corollary}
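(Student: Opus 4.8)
The plan is to deduce Corollary \ref{corfix} directly from Lemma \ref{atleastr} by a dichotomy on the number of fixed points of $t$. First I would dispose of the trivial case: if $t=1$ then $t$ fixes all $r^2$ points, but this is not one of the claimed values, so implicitly the statement should be read for $1\neq t$; I would state at the outset ``Let $1\neq t\in\ASL(2,r)$'' (the case $t=1$ being vacuous or excluded). Now fix any $2$-transitive representation of $\ASL(2,r)$ of degree $r^2$ and let $\fix(t)$ denote the fixed point set in that representation. The dichotomy is simply: either $|\fix(t)|\leq 1$ or $|\fix(t)|\geq 2$.

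In the first case there is nothing to prove in that representation: $t$ fixes $0$ or $1$ point, both of which appear in the list. In the second case, $t$ has at least two fixed points in this particular $2$-transitive representation of degree $r^2$, so Lemma \ref{atleastr} applies and tells us that $t$ fixes exactly $r$ points in \emph{every} $2$-transitive representation, in particular in the one we started with. Hence in every $2$-transitive representation of degree $r^2$, the number of fixed points of $t$ is $0$, $1$, or $r$, as claimed. (One should also note $r\geq 4$ here so that $0,1,r$ are genuinely three distinct values, consistent with $f\geq 2$, though this is not strictly needed for the statement.)

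The only subtlety worth a sentence is that Lemma \ref{atleastr} is stated as a conditional whose hypothesis is ``$t$ has at least two fixed points in \emph{some} $2$-transitive representation of degree $r^2$'' and whose conclusion is uniform over \emph{all} such representations; so once we are in the case $|\fix(t)|\geq 2$ for one representation, we immediately get the value $r$ for all of them simultaneously, and the corollary follows for every representation at once. There is no real obstacle here — the content is entirely carried by Lemma \ref{atleastr}, and the corollary is just the observation that ``$0$, $1$, or $\geq 2$'' combined with that lemma collapses to ``$0$, $1$, or $r$''. I would keep the proof to two or three lines.

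\begin{proof}
Let $1\neq t\in\ASL(2,r)$ and fix a $2$-transitive representation of $\ASL(2,r)$ of degree $r^2$. If $t$ fixes at most one point in this representation, then the number of fixed points of $t$ is $0$ or $1$. Otherwise $t$ fixes at least two points in this $2$-transitive representation of degree $r^2$, so by Lemma \ref{atleastr} $t$ fixes exactly $r$ points in every $2$-transitive representation of $\ASL(2,r)$, in particular in the given one. Hence in each $2$-transitive representation of $\ASL(2,r)$ of degree $r^2$, the element $t$ fixes $0$, $1$, or $r$ points.
\end{proof}
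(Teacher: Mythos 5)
Your proof is correct and is exactly the intended argument: the paper states the corollary without proof precisely because it follows from Lemma \ref{atleastr} by the dichotomy you describe (at most one fixed point, or at least two and hence exactly $r$ everywhere). Your remark about excluding $t=1$ is a fair observation about the statement's phrasing, but nothing further is needed.
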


\begin{lemma}\label{lem1} Let $t\in\ASL(2,r)$ be a non-trivial element of odd order.  Then $t$ fixes exactly one point in each $2$-transitive representation
of $\ASL(2,r)$.  
\end{lemma}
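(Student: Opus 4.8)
The plan is to reduce everything to the natural action and then to a one-line linear-algebra computation. By Proposition~\ref{permiso}, every $2$-transitive representation of $\ASL(2,r)$ of degree $r^2$ is permutationally isomorphic to its natural action on $V=\F_r^2$, so it suffices to show that a non-trivial element $t\in\ASL(2,r)$ of odd order fixes exactly one point of $V$, and then transfer this back through the permutational isomorphisms.

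To set up the computation, I would write $t$ as the affine transformation $\n\mapsto \n g+\mathbf{b}$ for the (unique) pair $(g,\mathbf{b})$ with $g\in\SL(2,r)$ and $\mathbf{b}\in V$; here $g$ is the image of $t$ under the quotient map $\ASL(2,r)\to\SL(2,r)$ with kernel $N$. Since $|N|=r^2$ is a power of $2$ while $|t|$ is odd, $\langle t\rangle\cap N=1$, and hence $g$ has the same (odd) order as $t$. The first case to dispose of is $g=1$: then $t$ is translation by $\mathbf{b}$, which has order $1$ or $2$, so $\mathbf{b}=\z$ and $t=1$, contrary to hypothesis. Thus $g\ne 1$ has odd order at least $3$.

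The fixed points of $t$ in $V$ are exactly the solutions $\n$ of the linear system $\n(I-g)=\mathbf{b}$, where $I$ is the identity matrix, so the claim reduces to showing that $I-g$ is invertible, i.e.\ that $1$ is not an eigenvalue of $g$. The characteristic polynomial of $g$ is $\Lambda^2-\operatorname{tr}(g)\Lambda+1$; if $1$ were a root, then $\operatorname{tr}(g)=2=0$ in $\F_r$ (recall $r=2^f$ is even), so the characteristic polynomial would be $\Lambda^2+1=(\Lambda+1)^2$ and $g$ would be unipotent. But a unipotent element of $\SL(2,r)$ satisfies $g^2=I$ in characteristic $2$, contradicting that $g$ has odd order $\ge 3$. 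Hence $I-g$ is invertible, $\n=\mathbf{b}(I-g)^{-1}$ is the unique fixed point of $t$ in the natural action, and Proposition~\ref{permiso} then gives exactly one fixed point in every $2$-transitive representation of degree $r^2$. (Alternatively, once one knows $t$ has at least one fixed point, Corollary~\ref{corfix} combined with Lemma~\ref{atleastr} — which produces an element of order $2$ from any two fixed points — already pins the count down to exactly one, so the uniqueness half of the argument is optional.)

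I do not expect a serious obstacle here: the only care needed is bookkeeping — confirming that oddness of $|t|$ forces oddness of $|g|$ (this is precisely where $|N|$ being a $2$-power is used) and that the affine normal form $\n\mapsto\n g+\mathbf{b}$ is compatible with composition in $\ASL(2,r)$ — while the eigenvalue/unipotent step is immediate in characteristic $2$.
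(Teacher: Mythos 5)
Your proof is correct, but it takes a genuinely different route from the paper's. The paper argues combinatorially inside an arbitrary degree-$r^2$ representation: assuming $t$ has no fixed points, it takes a minimal cycle length $b>1$ (odd, since $|t|$ is odd), rules out all cycles having length $b$ because $b$ cannot divide $r^2=2^{2f}$, and then applies Lemma~\ref{atleastr} to $t^b$ (which fixes at least $b\geq 2$ points) to force $|t|=2b$, a contradiction; uniqueness of the fixed point is again immediate from Lemma~\ref{atleastr}. You instead reduce via Proposition~\ref{permiso} to the natural affine action and do linear algebra: writing $t:\n\mapsto \n g+\mathbf{b}$ with $g\in\SL(2,r)$ of the same odd order (correctly justified via $\langle t\rangle\cap N=1$), you show $1$ is not an eigenvalue of $g$ because $\mathrm{tr}(g)=0$ would make $g$ unipotent, hence of order dividing $2$ in characteristic $2$, so $I-g$ is invertible and $\mathbf{b}(I-g)^{-1}$ is the unique fixed point. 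Your version is shorter, does not need Lemma~\ref{atleastr} or Corollary~\ref{corfix} at all, and exhibits the fixed point explicitly; the paper's version stays entirely within the permutation-theoretic framework and reuses the already-established Lemma~\ref{atleastr}. One point you should spell out in the transfer step: a permutational isomorphism between a coset action $\rho_i$ and the natural action need not carry $t$ to itself (this is precisely the subtlety the whole section is wrestling with), but it carries $t\rho_i$ to the natural image of some element $t'$ of the same order; since your natural-action computation covers \emph{every} non-trivial element of odd order, the count of one fixed point transfers to $t$ in every representation. With that sentence added, the argument is complete.
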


\begin{proof} Suppose in some $2$-transitive representation of $\ASL(2,r)$ that $t$ has no fixed points.  
Then in that representation $t$ has a cycle of minimal length $b>1$.  As the order of $t$ is equal
to the least common multiple of the disjoint cycle lengths of $t$, it follows that $b$ is odd as $|t|$ is odd.  
If all the disjoint cycles have length $b$ then $b$ divides $r^2$ (because $t$ has no fixed points), which is a contradiction
as $r=2^f$. Thus there exists a cycle of length $c>b$.  
Hence $t^b\neq 1$, and $t^b$ fixes at least $b>1$ points.  By Lemma \ref{atleastr}, it follows that $t^b$ fixes $r$ points
and $|t^b|=2$.  Hence $t^{2b}=1$.  Now, because $t^b\neq 1$ and $|t|$ is a multiple of $b$, it follows that $|t|=2b$, contradicting the fact
that $|t|$ is odd.  Thus $t$ fixes at least one point in every representation.  If $t$
fixes more than one point in some representation, then by Lemma \ref{atleastr}, $|t|=2$, which is a contradiction.
\end{proof}

\begin{lemma}\label{lem2}  Let $t\in\ASL(2,r)$ be an element of even order.  Then $t$ fixes the same number of points in each $2$-transitive representation 
of $\ASL(2,r)$.  
\end{lemma}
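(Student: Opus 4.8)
The plan is to deduce the lemma purely from Corollary~\ref{corfix} and Lemma~\ref{atleastr}, together with one elementary fact about $\SL(2,2^f)$. Recall from the proof of Lemma~\ref{atleastr} (an application of Proposition~\ref{permiso}) that every $2$-transitive representation of $\ASL(2,r)$ of degree $r^2$ is permutationally isomorphic to the natural action on $V$; hence for any such representation we may compute the number of fixed points of $t$ inside the natural action, and after conjugating in $\ASL(2,r)$ we may assume that a prescribed fixed point is $\mathbf{0}$. By Corollary~\ref{corfix} this number is $0$, $1$ or $r$, and we must show it is the same for all $2$-transitive representations of degree $r^2$ when $|t|$ is even.

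First I would dispose of the case in which $t$ fixes at least two, hence exactly $r$, points in some representation: since $r=2^f\geq 4$, Lemma~\ref{atleastr} then forces $t$ to fix $r$ points in \emph{every} such representation, so the count is constant. In the remaining case $t$ fixes at most one point in each $2$-transitive representation of degree $r^2$, and it then suffices to prove that $t$ cannot fix exactly one point in any of them --- for then $t$ fixes $0$ points in all of them.

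The heart of the matter is thus the claim that an element of $\ASL(2,r)$ of even order cannot fix exactly one point in a $2$-transitive representation of degree $r^2$. To prove it I would suppose $t$ fixes exactly the point $P$; relabelling the representation as the natural action on $V$ and conjugating in $\ASL(2,r)$ so that $P=\mathbf{0}$, one gets $t\in\ASL(2,r)_{\mathbf{0}}=\SL(2,r)$ acting on $V$ with no nonzero fixed vector, i.e.\ with $1$ not an eigenvalue. Since $r=2^f$, the eigenvalues $\lambda,\lambda^{-1}$ of such a $t$ are then distinct elements of $\F_{r^2}^{*}$ different from $1$, so $t$ is semisimple and its order divides $r^2-1$, an odd number --- contradicting that $|t|$ is even. (Alternatively: $t^{|t|/2}$ would be an involution of $\SL(2,2^f)$, hence a transvection, and $t$ centralises it; but the centraliser of a transvection is an elementary abelian Sylow $2$-subgroup of order $r$, so $|t|=2$ and $t$ is itself a transvection, which already fixes an entire $1$-dimensional subspace of $V$.) This proves the claim, and with the reductions above it proves the lemma.

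The only genuinely new ingredient beyond the structure already developed in this section is this elementary fact about element orders in $\SL(2,2^f)$, so I expect the ``hard part'' to be bookkeeping rather than substance: checking that Proposition~\ref{permiso} together with conjugation inside $\ASL(2,r)$ really does reduce every $2$-transitive representation of degree $r^2$ to the natural action with a chosen fixed point at the origin, and that the case analysis over the possible fixed-point counts $0$, $1$, $r$, across possibly inequivalent representations, is exhaustive.
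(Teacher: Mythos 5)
Your proof is correct, but on the decisive case it takes a genuinely shorter route than the paper. Both arguments start identically: reduce to the natural action on $V$ via Proposition~\ref{permiso}, and use Lemma~\ref{atleastr} (with Corollary~\ref{corfix}) to dispose of the case where $t$ has at least two fixed points in some representation, leaving only the claim that an even-order element cannot fix exactly one point. The paper then splits into $|t|=2$, handled by a parity count (an involution fixing exactly one point would have support of odd size $r^2-1$), and $|t|>2$, handled by a cycle-structure argument: the minimal nontrivial cycle length $b$ satisfies $t^b\neq 1$, so $t^b$ is an involution with $r$ fixed points, $|t|=2b$ with $b$ odd dividing $r-1$, and finally the unique conjugacy class of involutions in $\SL(2,r)$ together with its centraliser of order $r$ yields the contradiction. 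You replace all of this with a single observation inside the point stabiliser $\SL(2,r)$: an element with no nonzero fixed vector has eigenvalues $\lambda\neq\lambda^{-1}$ both different from $1$ (in characteristic $2$, $\lambda=\lambda^{-1}$ forces $\lambda=1$), hence is semisimple of order dividing the odd number $r^2-1$; your parenthetical alternative via the centraliser of a transvection is essentially the endgame of the paper's argument reached without the cycle bookkeeping. What your approach buys is uniformity (no case split on $|t|=2$ versus $|t|>2$) and brevity; what the paper's buys is that it stays almost entirely in permutation-group language, invoking only the single class of involutions and its centraliser rather than eigenvalues over $\F_{r^2}$. The one point needing care, which you flag and which the paper treats with the same level of informality in Lemma~\ref{atleastr}, is that fixing exactly one point in the representation $\rho_w$ only shows that some automorphic image of $t$, of the same even order, fixes exactly one point in the natural action; since your argument uses only the order of that element, this is harmless.
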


\begin{proof}  Suppose $|t|=2$.  Then, in any $2$-transitive representation of $\ASL(2,r)$, 
$t$ can be written as the disjoint union of transpositions.  Thus $t$ cannot fix exactly one point, otherwise 
$2$ would divide $r^2-1$, which is a contradiction.  Hence, by Corollary \ref{corfix}, $t$ either fixes $r$ points or $0$ points.  By Lemma \ref{atleastr}, 
if $t$ fixes $r$ points, then $t$ fixes $r$ points in each $2$-transitive representation of $\ASL(2,r)$.  
Hence if $t$ has no fixed points, then $t$ has no fixed points in every $2$-transitive representation of $\ASL(2,r)$.  

Now assume that $|t|>2$.  Because $|t|>2$, it follows from
Lemma \ref{atleastr} that $t$ cannot fix $r$ points. So, in every $2$-transitive representation, $t$ fixes either $0$ or $1$ point.
Suppose $t$ fixes $1$ point in some $2$-transitive representation, and let $b$ be the minimal non-trivial cycle length of $t$ in that representation.  
Further suppose that $t^b=1$, so $|t|=b$.  Then the disjoint union of the points in the $b$-cycles of $t$ is equal to
support of $t$, which has size $r^2-1$.  Hence $b$ divides $r^2-1$.
Thus $b$ is odd, contradicting the fact that $|t|$ is even.  Hence $t^b\neq 1$.  Now, $t^b$ fixes at least $1+b$ points, so by
Lemma \ref{atleastr}, $t^b$ fixes $r$ points and $|t^b|=2$.  Hence $|t|=2b$.  Moreover, the set of $r-1$ fixed points of $t^b$
points that are not fixed by $t$ is equal the disjoint union of the points that form the $b$-cycles of $t$.  Hence $b$ divides $r-1$, and so $b$ is odd.
However, as $t$ fixes one point, it follows that $t$ is conjugate to some $h\in\SL(2,r)$, and so $t^b$ is conjugate to $h^b\in\SL(2,r)$.
Now, $\SL(2,r)$ has only one conjugacy class of involutions (see for example \cite[p. 95]{grove}), 
so $h^b$ is conjugate the element $y$ in \eqref{gpelts}.  It is straightforward to show that the subgroup $\SL(2,r)_{{\bf{e_2}}}$ from 
(\ref{slstab}) is the centraliser in $\SL(2,r)$ of $y$, and has order $r$.  
Hence, because $h$ centralises $h^b$ it follows that $|h|$ divides $r$, and so $|t|$ divides $r$, contradicting the fact that $|t|=2b$ with $b$ odd.  
So $t$ has no fixed points in each $2$-transitive representation of $\ASL(2,r)$.  
\end{proof}

To prove Theorem \ref{affmain}, we let 
$\mathcal{I}=(\rho_1,\ldots,\rho_{r})$, where the $\rho_i$ are the representations of the $r$ inequivalent $2$-transitive actions of $\ASL(2,r)$ of
degree $r^2$.  It is a consequence of Lemmas \ref{lem1} and \ref{lem2} that $|\fix(t\rho_i)|=|\fix(t\rho_j)|$ for
all $i,j$ and for all $t\in\ASL(2,r)$.  Hence $|\supp(t\rho_i)|=|\supp(t\rho_j)|$ for all $i,j$ and for all $t\in\ASL(2,r)$.
Thus Theorem \ref{affmain} now follows from Lemma \ref{diffreps}.  

\section{Coset Actions on the Computer}\label{seccomp}

In this final section we use the algebraic computer software GAP \cite{GAP4} to analyse two further groups acting on sets of 
cosets of certain subgroups.

\subsection{The Affine Group $\ASL(3,2)$.}\label{secasl} We first consider the $2$-transitive action of the affine group 
$\AGL(3,2)$ on $\F_2^3$.  It is well known that the number of distinct actions of a $2$-transitive group 
$G$ of affine type is equal to the order of the cohomology group $H^1(G_0,N)$ \cite[Sec. 7.3]{pcam}, 
where $G_0$ is the stabiliser of a point in the natural action of $G$ and $N$ is the unique minimal normal subgroup of $G$.  
This in turn is equal to the number of $G$-conjugacy classes of the complements of $N$ in $G$.  For the group $\AGL(3,2)$, 
there are two distinct actions \cite[Table 7.3]{pcam}.  To generate these actions, 
we use the ``Complementclasses" function in GAP to find two representatives $H_1, H_2$ 
of these conjugacy classes.  
\begin{table}[h]
\centering
\begin{tabular}{l|c|c|c|c|c|c|c|c|c|c|c}
\hline\hline
Conjugacy Class&1&2&3&4&5&6&7&8&9&10&11\\
\hline
$|\supp(t\rho_1)|$&0&8&4&8&8&6&8&6&8&7&7\\
$|\supp(t\rho_2)|$&0&8&8&4&8&8&6&6&8&7&7\\\hline
Sum of supports&0&16&12&12&16&14&14&12&16&14&14\\\hline
\end{tabular}
\caption{$\ASL(3,2)$}
\label{asl32}
\end{table} If $\Omega_i$ is the set of right cosets of $H_i$ in $\ASL(3,2)$, 
we can construct in GAP the representation $\rho_i:\ASL(3,2)\longrightarrow \Sym(\Omega_i)$
that describes the action of $\ASL(3,2)$ on $\Omega_i$, for $i=1,2$.  
The group $\AGL(3,2)$ has $11$ conjugacy classes $\mathcal{C}_j$, and for each one, we 
calculate $|\supp(t\rho_i)|$ for some $t\in\mathcal{C}_j$ for $i=1,2$ and $j=1,\ldots,11$.  We give this information in Table \ref{asl32}.
The minimal degree of both actions is $4$, so $\Rep_2(\AGL(3,2),\rho_i)$ has minimum distance $8$ for $i=1$ or $2$.  
However, it follows from Proposition \ref{distlowerbound} that $C(\AGL(3,2),\mathcal{I})$ has minimum distance $12$, 
where $\mathcal{I}=(\rho_1,\rho_2)$.

\subsection{The Symmetric Group $S_6$.}\label{secs62}  The second action we consider is 
$S_6$ acting on the right cosets of subgroups of order $12$.  Using GAP, we 
determine that $S_6$ has four conjugacy classes of subgroups of order $12$.  Let $H_i$ be a representative for
each conjugacy class, $\Omega_i$ be the set of right cosets of $H_i$ in $S_6$, and $\rho_i$ be
the representation that describes the action.  We calculate in GAP 
the size of the supports for an element from each conjugacy class for each representation, which we
present in Table \ref{s62}.
From this table we see that $S_6$ under $\rho_3$ has the smallest minimal degree, which is $44$.
Thus, by letting $\mathcal{I}=(\rho_1,\ldots,\rho_4)$, it follows that $$\min_{\rho\in\mathcal{I}}\{\delta(\Rep_4(C(T,\rho))\}=4\cdot 44=176.$$
However, Proposition \ref{distlowerbound} implies that $C(S_6,\mathcal{I})$ has minimum distance $224$, the minimum of the sums of
the supports over the four actions, as is shown in Table \ref{s62}.  
\begin{table}[t]
\centering
\begin{tabular}{l|c|c|c|c|c|c|c|c|c|c|c}
\hline\hline
Conjugacy Class&1A&2A&2B&2C&3A&3B&4A&4B&5AB&6A&6B\\
\hline
$|\supp(t\rho_1)|$&0&56&60&60&60&48&60&60&60&60&60\\\hline
$|\supp(t\rho_2)|$&0&56&60&60&48&60&60&60&60&60&60\\\hline
$|\supp(t\rho_3)|$&0&56&44&60&57&60&60&60&60&59&60\\\hline
$|\supp(t\rho_4)|$&0&56&60&44&60&57&60&60&60&60&59\\\hline\hline
Sum of supports&0&224&224&224&225&225&240&240&240&239&239\\\hline
\end{tabular}
\caption{$S_6$ acting on subgroups of order $12$, one from each of the four conjugacy classes}
\label{s62}
\end{table} 
\begin{table}[h]
\centering
\begin{tabular}{c|c|c|c|c|c|c|c|c|c}
\hline\hline
$\mathcal{I}$&$\delta(C)$&$\mathcal{I}$&$\delta(C)$&$\mathcal{I}$&$\delta(C)$&$\mathcal{I}$&$\delta(C)$&$\mathcal{I}$&$\delta(C)$\\
\hline
$\{1,1,1,1\}$&192&$\{2,2,2,1\}$&204&$\{4,4,4,2\}$&192&$\{3,3,4,4\}$&208&$\{3,3,1,2\}$&208\\\hline
$\{2,2,2,2\}$&192&$\{2,2,2,3\}$&201&$\{4,4,4,3\}$&192&$\{1,1,2,3\}$&216&$\{3,3,1,4\}$&208\\\hline
$\{3,3,3,3\}$&176&$\{2,2,2,4\}$&204&$\{1,1,2,2\}$&216&$\{1,1,2,4\}$&213&$\{3,3,2,4\}$&208\\\hline
$\{4,4,4,4\}$&176&$\{3,3,3,1\}$&192&$\{1,1,3,3\}$&208&$\{1,1,3,4\}$&214&$\{4,4,1,2\}$&208\\\hline
$\{1,1,1,2\}$&204&$\{3,3,3,2\}$&192&$\{1,1,4,4\}$&208&$\{2,2,1,3\}$&213&$\{4,4,1,3\}$&208\\\hline
$\{1,1,1,3\}$&204&$\{3,3,3,4\}$&192&$\{2,2,3,3\}$&208&$\{2,2,1,4\}$&216&$\{4,4,2,3\}$&208\\\hline
$\{1,1,1,4\}$&201&$\{4,4,4,1\}$&192&$\{2,2,4,4\}$&208&$\{2,2,3,4\}$&213&$\{1,2,3,4\}$&224\\\hline\hline
\end{tabular}
\caption{$S_6$ acting on subgroups of order $12$, one from each of the four conjugacy classes}
\label{max}
\end{table} 

The minimum distance of $C(S_6,\mathcal{I})$ is actually maximal from an alternative perspective.  Consider
the set of representations $\mathcal{S}=\{\rho_1,\rho_2,\rho_3,\rho_4\}$.  Let $\mathcal{I}$ be formed
by taking any four elements from $\mathcal{S}$ but allowing repeats of representations.  Note that the order in which the representations
appear in $\mathcal{I}$ does not affect the minimum distance of the code it generates.  Using GAP, we 
calculate the minimum distance $\delta(C)$ of $C=C(S_6,\mathcal{I})$ for each possible $\mathcal{I}$ and give this in Table \ref{max}.  We see that 
this minimum distance is maximised when each representation appears exactly once in $\mathcal{I}$, as above.  
In Table \ref{max} we label $\mathcal{I}=(\rho_{i_1},\rho_{i_2},\rho_{i_3},\rho_{i_4})$ by $\{i_1,i_2,i_3,i_4\}$.  

\subsection{Proof of Theorem \ref{mainthm}}\label{proofmain}
Let $T$ be an abstract group, $\mathcal{I}$ be an ordered $r$-tuple of (not necessarily distinct)
permutation representations of $T$ into $S_q$ for some $q$.  
By letting $\delta_{\tw}=\delta(C(T,\mathcal{I}))$ and $\delta_{\rep}=\min_{\rho\in\mathcal{I}}\{\delta(\Rep_r(C(T,\rho))\}$, the first assertions
of Theorem \ref{mainthm} follow from Proposition \ref{distlowerbound}.  The final assertions follow from Sections \ref{secs6}, \ref{seca6},
\ref{secasl} and \ref{secs62}.  

\section{Acknowledgements}

For the first author, this research was supported by the Australian Research Council Federation Fellowship FF0776186 of the second author.


\end{document}